\crefname{section}{Section}{Sections}
\crefname{subsection}{\S}{\S\S}
\theoremstyle{plain}
\newtheorem{lemma}{Lemma}[section]
\newtheorem{proposition}[lemma]{Proposition}
\newtheorem{corollary}[lemma]{Corollary}
\newtheorem{theorem}[lemma]{Theorem}
\theoremstyle{nonumberplain}
\newtheorem{theoremN}{Theorem}
\theoremstyle{plain}
\newtheorem{definition}[lemma]{Definition}
\newtheorem{remark}[lemma]{Remark}
\crefname{definition}{definition}{definitions}
\crefname{ex}{example}{examples}
\crefname{remark}{remark}{remarks}
\crefname{convention}{convention}{conventions}
\crefname{lemma}{lemma}{lemmas}
\crefname{proposition}{proposition}{propositions}
\crefname{corollary}{corollary}{corollaries}
\crefname{theorem}{theorem}{theorems}
\crefname{assumption}{assumption}{Assumptions}
\crefname{equation}{}{}
\theoremstyle{nonumberplain}
\newtheorem{proof}{Proof}
\newtheorem{proof of main}{Proof of \Cref{th.main}}
\newtheorem{proof of Hall}{Proof of \Cref{th.Hall}}
\newtheorem{proof of sbgp}{Proof of \Cref{th.sbgp}}
\newtheorem{proof of res1}{Proof of \Cref{th.res1}}
\newtheorem{proof of res2}{Proof of \Cref{th.res2}}
\newtheorem{proof of key}{Proof of \Cref{pr.key}}
\newtheorem{proof of le.aux}{Proof of \Cref{le.aux}}
\newtheorem{proof of s2}{Proof of \Cref{th.s2}}
\newcommand\bC{{\mathbb C}}
\newcommand\bR{{\mathbb R}}
\newcommand\cC{{\mathcal C}}
\newcommand\cD{{\mathcal D}}
\newcommand\cF{{\mathcal F}}
\newcommand\cM{{\mathcal M}}
\newcommand\cO{{\mathcal O}}
\newcommand\cV{{\mathcal V}}
\DeclareMathOperator{\id}{id}
\DeclareMathOperator{\End}{\mathrm{End}}
\newcommand{\define}[1]{{\em #1}}
\newcommand{\cat}[1]{\textsc{#1}}
\newcommand{\qedhere}{\mbox{}\hfill\ensuremath{\blacksquare}}
\newcommand{\ol}[1]{\overline{#1}}
\renewcommand{\square}{\mathrel{\Box}}
\title{Relative Fourier transforms and expectations on coideal subalgebras}
\author{Alexandru Chirvasitu\footnote{SUNY at Buffalo, \url{achirvas@buffalo.edu}}}
\begin{document}

\date{}

\maketitle

\begin{abstract}
For an algebraic compact quantum group $H$ we establish a bijection between the set of right coideal $*$-subalgebras $A\to H$ and that of left module quotient $*$-coalgebras $H\to C$. It turns out that the inclusion $A\to H$ always splits as a map of right $A$-modules and right $H$-comodules, and the resulting expectation $E:H\to A$ is positive (and lifts to a positive map on the full $C^*$ completion on $H$) if and only if $A$ is invariant under the squared antipode of $H$.  

The proof proceeds by Tannaka-reconstructing the coalgebra $C$ corresponding to $A\to H$ by means of a fiber functor from $H$-equivariant $A$-modules to Hilbert spaces, while the characterization of those $A\to H$ which admit positive expectations makes use of a Fourier transform turning elements of $H$ into functions on $C$.  
\end{abstract}

\noindent {\em Key words: compact quantum group; coideal subalgebra; quotient coalgebra; $*$-coalgebra; Fourier transform; Tannaka reconstruction; faithfully flat; faithfully coflat}

\vspace{.5cm}

\noindent{MSC 2010: 16T20; 20G42; 46L51}

\tableofcontents

\section*{Introduction}

The common setting for the results of the paper will be that of Hopf algebras $H$ regarded as non-commutative analogues of algebras of (particularly well-behaved) functions on compact or algebraic groups. Adopting this perspective consistently then dictates that a coideal subalgebra $\iota:A\to H$ (i.e. a subalgebra satisfying
\begin{equation*}
  \Delta(a)\in A\otimes H,\ \forall a\in A
\end{equation*}
for the comultiplication $\Delta:H\to H\otimes H$) then ought to be regarded as the algebra of functions on a ``quantum homogeneous space'' of the quantum group attached to $H$.

The above heuristic is one possible source of interest in the issue of whether or not $H$ is faithfully flat over $A$ (or dually, faithfully {\it co}flat over a quotient left module coalgebra). The problem has attracted a non-trivial amount of interest in the Hopf algebra literature; see e.g. \cite{Tak79,schn,mw,scha,scha1,skr,ff} and the numerous references therein.

For a quick illustration of how (co)flatness relates to homogeneous spaces classically (i.e. when $H$ is commutative) recall for instance the following remark made in passing in the introduction of \cite{Tak79}: if $N\subseteq G$ is a closed embedding of linear algebraic groups, then the quotient stack $G/N$ is representable by an affine scheme (the homogeneous space we have been referring to) if and only if the Hopf algebra $\cO(G)$ of regular functions on the scheme $G$ is faithfully coflat over its Hopf algebra quotient $\cO(G)\to \cO(N)$. The rough conclusion the reader should draw from this is that (co)flatness has something to do with homogeneous spaces being, vaguely speaking, well behaved.

Linear algebraic groups $G$ as above that in addition happen to be semisimple and defined over the complex numbers can be recast as analytic objects: $G$ has a maximal compact subgroup $K$; moreover, $\cO(G)$ admits a $*$-structure under which it is precisely the algebra of {\it representative functions} on $K$ (i.e. the algebra of matrix coefficients of finite-dimensional unitary representations $K\to U_n$) equipped with complex conjugation.

We work here with Hopf algebras amenable to the same type of dual treatment, being analogous both to algebras of regular functions on a linear algebraic group scheme and algebras of representative functions on a compact group. These are the {\it CQG algebras} of \cite{DijKoo94}, meant to recapture the theory of compact quantum groups initiated in \cite{Wor87} in a purely algebraic setting. We recall the details in \Cref{subse.prel_cqg}, observing for now that our Hopf algebras $H$ will be (at least) cosemisimple and equipped with a $*$-structure, just as $\cO(G)$ in the preceding paragraphs.

When working with coideal subalgebras $\iota:A\to H$ it is natural in this $*$-algebraic setting to restrict attention to those $A$ that inherit a $*$-structure from that of $H$. One of the main results of the paper then simply reads as follows (see \Cref{cor.cncl,cor.left-fl})

\begin{theoremN}
  A CQG algebra is left and right faithfully flat over any coideal $*$-subalgebra. 
\end{theoremN}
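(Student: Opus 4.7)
The plan is to follow the Tannaka-reconstruction strategy advertised in the abstract. Given a coideal $*$-subalgebra $\iota:A\to H$, I would first produce the candidate quotient $*$-coalgebra $C$ attached to $A$ by the bijection announced in the abstract, concretely realized as an appropriate quotient of $H$ by the (one-sided) ideal generated by $A^+=A\cap \ker\varepsilon$. Cosemisimplicity of $H$, which is built into the CQG hypothesis via the Haar functional, ensures that $C$ inherits a cosemisimple $*$-coalgebra structure and that the canonical projection $\pi:H\to C$ is equivariant for the relevant comodule structures.

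The central step is to establish an equivalence between the category $\cD$ of finite-dimensional relative Hopf modules (finite-dimensional vector spaces carrying compatible right $H$-comodule and right $A$-module structures) and the category $\cM^C$ of finite-dimensional right $C$-comodules. To do this, I would build a symmetric monoidal $*$-functor $\omega:\cD\to \mathrm{Hilb}_{\mathrm{fd}}$ by forgetting to Hilbert spaces via the canonical invariant inner product that the positive Haar state induces on every finite-dimensional $H$-comodule. Tannakian reconstruction applied to $\omega$ then produces a coalgebra of matrix coefficients, and a direct computation should identify this reconstructed coalgebra with $C$: only matrix coefficients that factor through the projection $\pi$ survive the passage to $\cD$.

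With the equivalence $\cD\simeq \cM^C$ in hand, faithful flatness follows essentially formally. Under the equivalence, the object $H\in\cD$ corresponds to the regular $C$-comodule $C\in\cM^C$, and the free object $V\otimes A\in\cD$ attached to a finite-dimensional $H$-comodule $V$ corresponds to the cofree comodule $V\otimes C$; together these comparisons produce an isomorphism $H\cong A\otimes C$ of right $A$-modules, so that $H$ is free, and in particular faithfully flat, as a right $A$-module. Faithful flatness on the opposite side can then be deduced either by applying the same argument to $H^{\mathrm{op}}$ or by using that the composition of the antipode with the involution exchanges left and right coideal $*$-subalgebras.

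The principal technical obstacle I expect is the Tannaka reconstruction step itself. The category $\cD$ is not automatically semisimple as a monoidal category, and the classical Tannakian machinery must be carefully adapted to this relative ($A$-linear) setting. Positivity of the Haar state, inherent to the CQG hypothesis, should be the key input yielding a sufficiently well-behaved fiber functor and, crucially, ruling out pathologies that could otherwise cause the reconstructed coalgebra to be strictly larger than $C$; once that identification is secured, the passage from the categorical equivalence to the module-theoretic conclusion of freeness over $A$ is a routine application of the relative Peter--Weyl picture.
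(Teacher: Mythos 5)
Your overall strategy---reconstruct $C=H/HA^+$ Tannakianly from a category of $H$-equivariant $A$-modules, prove that category semisimple using the positivity built into the CQG hypothesis, and deduce flatness from the resulting equivalence---is the paper's strategy, and you correctly identify semisimplicity of the module category as the crux (the paper proves it by exhibiting each endomorphism algebra $\End(V\otimes A)\cong\hom_{\cM^H}(V^*\otimes V,A)$ as a $*$-subalgebra of the finite-dimensional $C^*$-algebra $\End(V)$). But two of your steps contain genuine gaps. First, your category $\cD$ of \emph{finite-dimensional} relative Hopf modules is not the right category and your fiber functor is not the right functor: the objects that drive the reconstruction are the free modules $V\otimes A$ for $V\in{^f}\cM^H$, which are typically infinite-dimensional over $\bC$ (you invoke $V\otimes A\in\cD$ yourself later, contradicting the finite-dimensionality requirement), and the fiber functor is not the forgetful functor but $M\mapsto M/MA^+$. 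That specific choice is exactly what makes the reconstructed coalgebra come out as $C$ rather than $H$; "forgetting to Hilbert spaces" would not even land in finite-dimensional spaces on these objects.

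Second, the passage from the equivalence to faithful flatness fails as stated. Under the equivalence $M\mapsto M/MA^+$, the object $V\otimes A$ corresponds to $V$ with coaction corestricted along $\pi:H\to C$, \emph{not} to the cofree comodule $V\otimes C$ (for instance $A$ itself corresponds to the one-dimensional comodule $\langle 1\rangle$, not to $C$), so the comparison you propose does not yield an isomorphism $H\cong A\otimes C$; moreover freeness of $H$ over a coideal subalgebra is a substantially stronger claim than faithful flatness and is not what the categorical equivalence delivers. The paper instead deduces right faithful flatness from Takeuchi's criterion (\Cref{th.tak2}): cosemisimplicity of $C$ gives faithful coflatness of $H$ as a $C$-comodule, hence the adjunction \Cref{eq:adj-bis} is an equivalence, and by Takeuchi this is \emph{equivalent} to right faithful flatness. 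Finally, your reduction of left to right faithful flatness via $x\mapsto (Sx)^*$ replaces $A$ by the different left coideal subalgebra $(SA)^*$, so it proves flatness over the wrong subalgebra; the paper's \Cref{le.lr-flat} instead uses complex conjugation together with the $*$-structure of $A$ itself (setting $m\triangleleft a:=a^*m$) to identify left and right $A$-module categories over the same $A$.
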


Furthermore, an inclusion $\iota$ of a right coideal $*$-subalgebra automatically splits as a right $A$-module and right $H$-comodule map via $E:H\to A$ (`splits' as in $E\circ\iota=\mathrm{id}_A$). The intuition here is that
\begin{equation*}
  E:\text{functions on the quantum group }G\to \text{ functions on its quantum homogeneous space }X
\end{equation*}
is ``integration along the fibers of $G\to X$'' (see \Cref{re.e} for a more precise take on this). As the word `integration' is meant to suggest, the topic will serve as a bridge between the algebraic (and category theoretic) discussion in \Cref{se.Tannaka,se.corr} and the operator algebraic framework that \Cref{se.exp} is naturally housed in.

In the compact quantum group  literature one typically completes $H$ with respect to a $C^*$ norm, and similarly for $A$. It is then of interest, in this analytic setup, whether or not $E:H\to A$ as above extends to an expectation in the $C^*$-algebraic sense (see e.g. \cite[Definition III.3.3]{tak1} or \cite[Definition IX.4.1]{tak2}):

\begin{itemize}
\item $E|A=\mathrm{id}_A$ (already the case);
\item $(h|A)\circ E = h$ (already the case);  
\item $\|E\|\le 1$. 
\end{itemize}

The latter norm condition depends on the $C^*$ norms under which $H$ is being completed. Here, we always complete $H$ with respect to its maximal $C^*$ norm and $A$ with respect to its subspace topology; we denote the completions by $H_u$ and $A_u$ respectively. It turns out that if $E$ does indeed lift to a $C^*$ expectation in the above sense then it is {\it completely positive}:

\begin{equation}\label{eq:pos}
\sum_{i,j}a_i^*E(x_i^*x_j)a_j\ge 0 
\end{equation}
in $A_u$ for every choice of finite tuples $(a_i)_{i=1}^n\subset A$ and $(x_i)_{i=1}^n\subset H_u$. 

Conversely, if our $E:H\to A$ is positive in the sense that \Cref{eq:pos} holds in $A_u$ then it extends to a $C^*$ expectation. This type of extensibility is of interest, for instance, in the literature on {\it idempotent states} on (locally) compact quantum groups \cite{fs1,fs2,fsr,ss1,ss2}. Idempotent states are, just as the name suggests, non-commutative analogues of idempotent measures on classical locally compact groups. For the latter, idempotent measures are Haar states on closed subgroups; in the quantum setting this is not quite the case, but the classification achieved in \cite[Theorem 1]{ss2} and earlier work cited therein puts idempotent states in bijection with coideal subalgebras $A$ admitting an expectation $E:H\to A$ in the sense we have been discussing.

The paper is organized as follows.

\Cref{se.prel} is devoted to recalling some of the conventions and auxiliary machinery used in the sequel, including compact quantum groups in their algebraic guise as covered in \cite{DijKoo94} and a brief recollection of Tannaka reconstruction (e.g. \cite{Sch92,Par}).

In \Cref{se.Tannaka} we consider right coideal subalgebras $\iota:A\to H$ and recover the resulting left module quotient coalgebra $C=H/HA^+$ via Tannaka reconstruction from a category of $H$-equivariant right $A$-modules (see \Cref{th.Tannaka_quot}). This will be useful later on, when we specialize to the case of CQG algebras and their right coideal $*$-subalgebras. Moreover, \Cref{cor.Tannaka_quot_expect} will later provide an expectation $E:H\to A$ splitting the inclusion $\iota$; this map will be the focus of the last section.  

\Cref{se.corr} is placed within this more restrictive context of CQG algebras. Its main result, \Cref{th.corr2}, argues that attaching $C=H/HA^+$ to $A$ as above implements a Galois correspondence between the lattice of right coideal $*$-subalgebras of $H$ and its left module quotient $*$-coalgebras. 

Finally, the main result (\Cref{th.s2}) of \Cref{se.exp} answers the question of when the expectation $E:H\to A$, whose existence is ensured by part (1) of \Cref{cor.cncl}, is positive in the sense of \Cref{eq:pos}. As explained above, this positivity is of interest in the study of idempotent states on compact quantum groups, and is the kind of behavior one requires of $E$ in order to transport the discussion the setting of operator algebras by taking $C^*$ (or von Neumann) completions. 

Part of the proof of \Cref{th.s2} relies on the Fourier transform introduced in \cite[Definition 3.10]{ff} and recalled in \Cref{def.four}. This is a map from $H$ to a space of functionals on $C$, acting as a non-commutative analogue of the Fourier transform on a locally compact abelian group: it turns elements of $H$, which are functions on our ``compact quantum group'' $G$, into functionals on $C$, i.e. intuitively functions on a quantum homogeneous space of the Pontryagin dual $\widehat{G}$.  

\subsection*{Acknowledgements}

The author was partially supported by NSF grants DMS-1565226 and DMS-1801011. 

I would also like to thank the anonymous referee for a very careful reading and invaluable suggestions for the improvement of the initial draft.

\section{Preliminaries}\label{se.prel}

All algebraic objects in this paper are vector spaces over the complex numbers and similarly, most categories we work with are $\bC$-linear. Algebras are always assumed to be unital (unless specified otherwise) and associative  and dually, coalgebras are counital and coassociative. For general material on coalgebras and Hopf algebras we refer to \cite{Swe69,Mon93,KliSch97,Rad12}. 

For coalgebras $C$ we use Sweedler notation (\cite[$\S$1.2.2]{KliSch97}) for comultiplication, as in $C\ni x\mapsto x_1\otimes x_2\in C\otimes C$. Similarly, we write $V\ni v\mapsto v_0\otimes v_1\in V\otimes C$ to denote a right $C$-comodule structure on $V$.

The letter $\cM$ with adorned corners denotes a category of (co)modules, perhaps with additional structure: Upper right corner means right comodule, lower left corner means left modules, etc. For instance, $\cM^C$ is the category of right comodules over the coalgebra $C$, and similarly, ${_A}\cM$ is the category of left $A$-modules.

Multiple decorations signify multiple structures that play well together, mostly in a way that lends itself to intuition. For instance, if $A$ is a right comodule-algebra over a Hopf algebra $H$, then $\cM_A^H$ is the category of \define{$H$-equivariant $A$-modules}: Its typical object $M$ is a right $A$-module and right $H$-comodule such that the module structure map $M\otimes A\to M$ is a morphism in the monoidal category $\cM^H$.  

We reserve an upper left $f$ to indicate finite-dimensional objects, as in ${^f}\cM^C$ (for finite-dimensional right $C$-comodules). We never use left comodules, so this will not clash with the other decorations.  

A $\bC$-linear category is \define{semisimple} if the endomorphism complex algebra of any object is semisimple and finite-dimensional. 

Although we work mostly algebraically, we will make use of operator-algebraic language. All we need is covered e.g. in \cite{tak1}. The terms `von Neumann algebra' and $W^*$-algebra are treated as synonymous in this paper. 

The symbol `$\otimes$' means different things depending on the nature of the tensorands. Mostly, it is simply the ordinary tensor product of vector spaces (over the complex numbers). When appearing between two $C^*$-algebras, it is the \define{minimal}, or \define{injective} tensor product \cite[IV.4.8]{tak1}. Similarly, when appearing between von Neumann algebras it denotes the \define{spatial} or $W^*$ tensor product \cite[IV.5.1]{tak1}. 

In all cases tensoring produces an object of the same nature as the tensorands: If $A$ and $B$ are $C^*$-algebras then so is $A\otimes B$, and similarly in the $W^*$ case.

\subsection{CQG algebras and quantum groups}\label{subse.prel_cqg}

For background on compact quantum groups we refer to \cite[Chapter 11]{KliSch97} or to the survey \cite{KusTus99}. 

Recall first that a \define{Hopf $*$-algebra} is a complex $*$-algebra $A$ as well as a Hopf algebra such that the comultiplication $\Delta:A\to A\otimes A$ and the counit $\varepsilon:A\to\bC$ are $*$-algebra maps; we reserve the symbols $\Delta$ and $\varepsilon$ for the comultiplications and counits respectively.

CQG algebras are Hopf $*$-algebras that behave like algebras of representative functions on a compact group (i.e. linear combinations of matrix entries coming from finite-dimensional representations of the group), minus the commutativity. One way to formalize this is as follows:

\begin{definition}\label{def.compat}
Let $H$ be a Hopf $*$-algebra and $\rho:M\to M\otimes H$ a finite-dimensional right comodule. Denote by $M^*$ the dual of $M$ in the monoidal category $\cM^H$, and by $\overline{M}$ the complex conjugate space equipped with the $H$-comodule structure 
\begin{equation}\label{eq:conj_comod}
\begin{tikzpicture}[auto,baseline=(current  bounding  box.center)]
  \node (1) at (0,0) {$ M$};
  \node (2) at (3,0) {$ M\otimes H$};
  \node (3) at (6,0) {$ M\otimes H$.};
  \draw[->] (1) to node {$ \rho$} (2);
  \draw[->] (2) to node {$ \id\otimes *$} (3);
\end{tikzpicture}
\end{equation} 

An inner product $\langle -| -\rangle$ on $M$ can be thought of as a map from the complex conjugate space $\overline{M}$ to the dual $M^*$. We say that $\langle -| -\rangle$ is \define{compatible} with the comodule structure if this map is one of comodules. 
\end{definition}

\begin{definition}\label{def.cqg}
A \define{CQG algebra} is a Hopf $*$-algebra for which every finite-dimensional right comodule admits a compatible inner product.  
\end{definition}

Comodules admitting a compatible inner product are sometimes called \define{unitarizable}, while those equipped with a specific such inner product are \define{unitary}. We denote the category of unitary comodules by ${_u}\cM^H$, with an extra `f' in the upper left corner to indicate the finite-dimensional ones. 

Commutative CQG algebras are precisely algebras of representative functions on compact groups. Just as in this classical case, a CQG algebra has a distinguished functional, the \define{Haar state} \cite[$\S$11.3.2]{KliSch97}, behaving as a bi-invariant probability measure on the corresponding ``quantum group'': If $h:H\to\bC$ is the Haar state of the CQG algebra and $H\ni x\mapsto x_1\otimes x_2\in H\otimes H$ is the comultiplication of $H$, then
\begin{equation}\label{eq:biinv}
 h(x_1)x_2 = h(x)1 = h(x_2)x_1,\ \forall x\in H. 
\end{equation}
This is equivalent to $\varphi h=h\varphi = \varphi(1)h$ for all functionals $\phi: H\to \bC$, where the space of functionals is made into an associative algebra as usual, using the comultiplication:  
\begin{center}
\begin{tikzpicture}[auto]
  \node (1) at (0,0) {$ H$};
  \node (2) at (3,0) {$ H\otimes H$};
  \node (3) at (6,0) {$ \bC$};
  \draw[->] (1) to node {$\scriptstyle \Delta$} (2);
  \draw[->] (2) to node {$\scriptstyle \varphi\otimes\psi$} (3);
  \draw[->,bend right=30] (1) to node [swap] {$\scriptstyle \varphi\psi$} (3);
\end{tikzpicture}
\end{center}  
for $\varphi,\psi$ in the dual $H^*$. 

The existence of a unital functional $h$ biinvariant in the sense of \Cref{eq:biinv} characterizes \define{cosemisimple} Hopf algebras, i.e. those whose categories of finite-dimensional comodules are semisimple. CQG algebras satisfy an even stronger condition: The Haar state $h$ is \define{positive}, in the sense that $h(x^*x)\ge 0$ for all $x\in H$ and {\it faithful} in that the preceding inequality is an equality only for $x=0$. 

The positivity of the Haar functional allows us to pass from the purely algebraic setup above to the analytic one: 

Completing $H$ in the GNS representation for $h$ \cite[I.9.15]{tak1} with respect to either the norm or the weak$^*$ topology produces a $C^*$ and respectively a von Neumann algebra. We denote them by $C^*_{r}(H)$ and $W^*_{r}(H)$ respectively (`$r$' for `reduced'). 

The comultiplication $\Delta_H:H\to H\otimes H$ lifts to a morphism $\Delta_{\overline{H}}:\overline{H}\to\overline{H}\otimes\overline{H}$ in the appropriate category ($C^*$ or $W^*$-algebras) when $\overline{H}$ is either $C^*_{r}(H)$ or $W^*_{r}(H)$ (recall that the tensor product also changes meaning between the two cases).

$C^*_{r}(H)$ is the smallest $C^*$ completion of $H$ that will admit a continuous extension of $h$ to a state (in the usual sense for $C^*$-algebras: continuous positive, unital functional). There is also a \define{largest} $C^*$-completion for $H$, obtained by assigning to each element $x\in H$ the largest possible norm of $x$ in a $*$-representation of $H$ on a Hilbert space \cite[$\S$4]{DijKoo94}. We denote this latter completion by $C^*_f(H)$ (for `full'). As before, the comultiplication of $H$ lifts to a $C^*$-algebra map $C^*_f(H)\to C^*_f(H)\otimes C^*_f(H)$. 

$C^*_r(H)$ and $C^*_f(H)$ equipped with their respective comultiplications are both $C^*$-algebraic compact quantum groups, as first introduced in \cite{Wor87}. We do not recall the definition of the abstract notion here, since much of the analysis in this paper is carried out in the purely algebraic framework. We remind the reader only that every $C^*$-algebraic compact quantum group is a unital $C^*$-algebra equipped with a comultiplication (satisfying some additional properties), and that every such algebra has a unique dense CQG subalgebra; see e.g. \cite[Theorem 3.1.7]{KusTus99}. This CQG subalgebra is to be thought of as the algebra of representative functions sitting inside the algebra of all continuous functions on the quantum group; for this reason, the CQG algebra contains all of the relevant representation-theoretic information.

\subsection{Tannaka reconstruction}\label{subse.prel_Tannaka}

The main references for this subsection are \cite{Sch92} an Chapter 3 of \cite{Par}. As there is a wealth of material on the topic in the literature, we give only a brief and somewhat informal account, covering only what is needed below. 

The input is a functor $\omega:\cC\to \cV$ which one wants to think of as the forgetful functor from some category of (co)modules to that of vector spaces.  More precisely $\cV$ is always a (typically symmetric) monoidal category, and one looks for a canonical construction of (say) an algebra $A$ internal to $\cV$ so that $\omega$ factors up to natural isomorphism as 
\begin{center}
\begin{tikzpicture}[auto,baseline=(current  bounding  box.center)]
  \node[] (1) at (0,0) {$ \cC$};
  \node[] (2) at (4,0) {$ \cM_A$};
  \node[] (3) at (2,-1) {$ \cV$};
  \draw[->,bend left=10] (1) to node {} (2);
  \draw[->,bend right=10] (1) to node[below] {$\scriptstyle \omega$} (3); 
  \draw[->,bend left=10] (2) to node {$\scriptstyle \text{forget}$} (3);
\end{tikzpicture}
\end{center}
One way to get a candidate algebra $A$ is by demanding that it act on $\omega$ universally (on the right, say). What this means is one considers the functor $\cat{act}:\cV^{\mathrm{op}}\to\cat{Set}$ defined by
\[
 \cV\ni V\mapsto \text{natural transformations }\omega\otimes V\to \omega, 
\] 
and looks for an object $A\in\cV$ that represents it, in the sense that $\cat{act}\cong \hom_\cV(\bullet,A)$. 

It turns out that for various technical reasons, this works better dually: 

Suppose $\cV$ is cocomplete, the tensor products play well with the colimits, and $\omega$ lands inside the full subcategory of rigid objects (i.e. those which admit a left dual). Then the (this time covariant) functor  
\[
 \cat{coact}:\cV\to\cat{Set},\quad V\mapsto \text{natural transformations } \omega\to \omega\otimes V
\]
is representable as $\hom_{\cV}(C,\bullet)$ for some object $C\in\cV$, and $C$ is naturally a coalgebra which factors $\omega$ as 
\begin{center}
\begin{tikzpicture}[auto,baseline=(current  bounding  box.center)]
  \node[] (1) at (0,0) {$ \cC$};
  \node[] (2) at (4,0) {$ \cM^C$};
  \node[] (3) at (2,-1) {$ \cV$};
  \draw[->,bend left=10] (1) to node {} (2);
  \draw[->,bend right=10] (1) to node[below] {$\scriptstyle \omega$} (3); 
  \draw[->,bend left=10] (2) to node {$\scriptstyle \text{forget}$} (3);
\end{tikzpicture}
\end{center}
The sloppiness in the above statement will not cause any difficulties below. For us, $\cV$ will always be some well-behaved category for which everything works out: complex vector spaces or perhaps complex $*$-vector spaces (i.e. complex vector spaces equipped with a conjugate-linear involutive automorphism). 

The coalgebra $C$ associated to $\omega$ in the manner described above is sometimes called the \define{coendomorphism coalgebra} of $\omega$, and is denoted by $\cat{coend}(\omega)$. 

The association $\omega\mapsto \cat{coend}(\omega)$ is functorial in the sense that if $\cC\to \cD$ is a functor making 
\begin{center}
\begin{tikzpicture}[auto,baseline=(current  bounding  box.center)]
  \node[] (1) at (0,0) {$ \cC$};
  \node[] (2) at (2,0) {$ \cD$};
  \node[] (3) at (1,-1) {$ \cV$};
  \draw[->,bend left=10] (1) to node {} (2);
  \draw[->,bend right=10] (1) to node[below] {$\scriptstyle \omega$} (3); 
  \draw[->,bend left=10] (2) to node {$\scriptstyle \eta$} (3);
\end{tikzpicture}
\end{center}
commute up to isomorphism, then there is a morphism $C\to D$ between the coendomorphism coalgebras of $\omega$ and $\eta$ respectively whose associated scalar corestriction functor $\cM^C\to\cM^D$ makes the diagram
\begin{equation}\label{eq:3d_diagram}
  \begin{tikzpicture}[baseline=(current  bounding  box.center),anchor=base,cross line/.style={preaction={draw=white,-,line width=6pt}}]
    \path (0,0) node (1) {$\scriptstyle \cC$} +(3,0) node (2) {$\scriptstyle \cD$} +(2,-.5) node (3) {$\scriptstyle \cM^C$} +(5,-.5) node (4) {$\scriptstyle \cM^D$} +(3,-2) node (5) {$\scriptstyle \cV$}; 
    \draw[->] (1) -- (2);
    \draw[->] (1) -- (3);
    \draw[->] (2) -- (4);
    \draw[->] (1) to[bend right=30] node[pos=.5,auto,swap] {$\scriptstyle \omega$} (5);
    \draw[->] (3) to[bend right=20] (5);
    \draw[->] (4) to[bend left=30] (5);
    \draw[->] (2) -- (5) node[pos=.7,auto] {$\scriptstyle \eta$};
    \draw[->,cross line] (3) -- (4);
  \end{tikzpicture}
\end{equation}
commutative, or rather 2-commutative; there are natural isomorphisms filling in the triangles compatibly, which we have suppressed.

Additional structure on $\cC$ and structure-preserving conditions on $\omega$ translate to additional structure ``of the same kind'' on the coendomorphism coalgebra $C$. If $\cC$ and $\omega$ are monoidal, then $\cC$ is a bialgebra. If in addition $\cC$ is rigid (in which case $\omega$ will automatically preserve duals), then $C$ is a Hopf algebra. Moreover, the functoriality \Cref{eq:3d_diagram} holds with appropriate modifications (monoidal functors $\cC\to \cD$ induce morphisms of bialgebras, etc.).  

In the world of compact quantum groups one constantly deals with $*$-structures. To see what these correspond to from   Tannakian perspective, recall first the following notion.

\begin{definition}\label{def.*struc}
A \define{$*$-structure} on a $\bC$-linear category $\cC$ is a conjugate-linear, involutive contravariant functor $*:\cC\to \cC$ which is (naturally isomorphic to) the identity at the level of objects.  


A $\bC$-linear category equipped with a $*$-structure is a \define{$*$-category}. 
\end{definition}

Now suppose $\cC$ is a $*$-category and $\cV$ is the cocompletion of the category ${^f}\cV$ of finite-dimensional Hilbert spaces. ${^f}\cV$ is a $*$-category in an obvious way, with the $*$-functor taking adjoints of operators. Since we are assuming that $\omega$ takes values in ${^f}\cV$, it makes sense to require that it intertwine the $*$-structures (i.e. that it be a $*$-functor). If it does, then in the above setup one can show that $C=\cat{coend}(\omega)$ is naturally equipped with a conjugate-linear, involutive, comultiplication-reversing operation $*$. This makes the following definition relevant

\begin{definition}\label{def.*coalg}
A \define{$*$-coalgebra} is a complex coalgebra equipped with an involutive, conjugate-linear, comultiplication-reversing map. 
\end{definition}

\begin{remark}
  $*$-coalgebras as defined here are the $\circ$-coalgebras of \cite{afh}. 
\end{remark}

The typical example of a functor $\omega$ meeting all the requirements from the discussion preceding \Cref{def.*coalg} is the forgetful functor from the category ${_u}{^f}\cM^H$ of finite-dimensional unitary $H$-comodules for a CQG algebra $H$ to finite-dimensional Hilbert spaces. The operation making $H$ into a $*$-coalgebra is \define{not} the usual $*$ from the definition of a CQG algebra; that map reverses multiplication and preserves comultiplication. Rather, it is $H\ni x\mapsto (Sx)^*$ (this is automatically an involution for any Hopf $*$-algebra).

\section{Coideal subalgebras and equivariant modules}\label{se.Tannaka}

The general setup for this section is as follows: $H$ will be a Hopf algebra, $\iota:A\to H$ a right coideal subalgebra, and $\pi:H\to C$ a left module quotient coalgebra. 

There is a duality construction that gives rise to an inclusion $\iota$ given $\pi$ and vice versa: For fixed $\iota:A\to H$, define $H_A=H_\iota=H/HA^+$, where $A^+=\ker\varepsilon|_A$. Dually, given $\pi:H\to C$, set 
\[
 {^C}H={^\pi}H=\{ h\in H\ |\ \pi(h_1)\otimes h_2 = \pi(1)\otimes h\}.
\]
These constructions implement order-reversing maps
\begin{equation}\label{eq:Galois}
\begin{tikzpicture}[auto,baseline=(current  bounding  box.center)]
  \node[text width=4cm] (1) at (0,0) {set of right coideal subalgebras of $H$};
  \node[text width=4.5cm] (2) at (8,0) {set of quotient left module coalgebras of $H$};
  \draw[->] (1) .. controls (2,1) and (6,1) .. node{$A\mapsto H_A$} (2);
  \draw[<-] (1) .. controls (2,-1) and (6,-1) .. node[below] {${^C}H\mapsfrom H$} (2);
\end{tikzpicture}
\end{equation}
with respect to the standard orderings on the sets of subspaces and quotients of $H$ respectively.

Fix $\iota:A\to H$, and consider the corresponding quotient $\pi:H\to C=H_\iota$. The functors 
\begin{equation}\label{eq:adj}
\begin{tikzpicture}[auto,baseline=(current  bounding  box.center)]
  \node[] (1) at (0,0) {$\cM^H_A$};
  \node[] (2) at (6,0) {$\cM^C$};
  \draw[->] (1) .. controls (2,.5) and (4,.5) .. node{$M\mapsto M/MA^+$} (2);
  \draw[<-] (1) .. controls (2,-.5) and (4,-.5) .. node[below] {$N\square_CH \mapsfrom N$} (2);
\end{tikzpicture}
\end{equation}
constitute an adjunction, where the lower arrow indicates the \define{cotensor product} between the right $C$-comodule $N$ and the left $C$-comodule $H$. If $\rho: N\to N\otimes C$ is the comodule structure map, then $N\square_CH$ is defined as the equalizer
\begin{equation}\label{eq:cotensor}
\begin{tikzpicture}[auto,baseline=(current  bounding  box.center)]
  \node[] (1) at (0,0) {$N\square_CH$};
  \node[] (2) at (2,0) {$N\otimes H$};
  \node[] (3) at (6,0) {$N\otimes C\otimes H$.};
  \draw[->] (1) to (2);
  \draw[->] (2) .. controls (3,.5) and (5,.5) .. node{$\scriptstyle \rho\otimes\id_H$} (3);
  \draw[->] (2) .. controls (3,-.5) and (5,-.5) .. node[below] {$\scriptstyle \id_N\otimes (\pi\otimes\id_H)\circ\Delta$} (3);
\end{tikzpicture}
\end{equation}

If $H$ is, say, the algebra of regular functions on a linear algebraic group $G_H$ and $C$ is functions on a closed subgroup $G_C\le G_H$, then the lower arrow in \Cref{eq:adj} is the induction functor from $G_H$-representations to $G_C$ representations. In good situations, the adjunction \Cref{eq:adj} is an equivalence. In the setting of algebraic groups, for instance, this is the case whenever $G_H$ and $G_C$ are reductive. 

In general, technical conditions such as (co)flatness will ensure that the general setup has pleasant properties. Recall that $H$ is said to be \define{coflat} as a $C$-comodule if the endofunctor $\bullet\square_CH$ of $\cM^C$ defined by \Cref{eq:cotensor} is exact; it is \define{faithfully} coflat if the functor is exact and faithful. There is an obvious notion of coflatness for right comodules as well, and if $C$ is cosemisimple (i.e. its category of finite-dimensional comodules is semisimple) then all comodules, right or left, are coflat.

With this in place, we have for instance the following paraphrase of \cite[Theorem 1]{Tak79}:

\begin{theorem}\label{th.tak1}
Let $H$ be a Hopf algebra, $\iota:A\to H$ be a right coideal subalgebra, and $C=H_\iota$. 

The adjunction \Cref{eq:adj} is an equivalence if and only if $H$ is a faithfully coflat left $C$-comodule. This is the case whenever $H$ is a faithfully flat left $A$-module. \qedhere
\end{theorem}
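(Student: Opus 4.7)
My plan is to follow the classical path of \cite{Tak79}. First I would verify that the pair in \Cref{eq:adj} is genuinely an adjunction, with unit $\eta_N : N \to (N\square_C H)/(N\square_C H)A^+$ induced by the $C$-coaction of $N$ (followed by quotienting the second tensorand by $A^+$) and counit $\varepsilon_M : (M/MA^+)\square_C H \to M$ induced by the $H$-coaction of $M$; the triangle identities reduce to Sweedler-notation manipulations.

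The forward direction of the biconditional is formal: if the adjunction is an equivalence then the right adjoint $\bullet \square_C H$ is part of an equivalence, hence exact and faithful, which is by definition faithful coflatness of $H$ as a left $C$-comodule. For the converse, assume faithful coflatness. The key observation is that $H \in \cM_A^H$ (with regular action and coaction) is sent to $C$ by the left adjoint, and $C\square_C H \cong H$ via the cotensor-counit identification, so $\varepsilon_H$ is an isomorphism. Since an arbitrary $M \in \cM_A^H$ admits a presentation by objects of the form $V \otimes H$ (with the structures induced from the right factor), the right-exactness of $\bullet/(\bullet)A^+$, left-exactness of $\bullet \square_C H$ (from coflatness), and faithfulness jointly upgrade $\varepsilon_H$ being an iso to $\varepsilon_M$ being an iso for every $M$. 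A dual argument using the cofree resolution $N \hookrightarrow N\otimes C \rightrightarrows N \otimes C \otimes C$ shows that $\eta_N$ is an isomorphism for every $N \in \cM^C$.

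For the second implication, that faithful flatness over $A$ forces faithful coflatness over $C$, the bridge is the canonical Galois map
\begin{equation*}
  \beta : H \otimes_A H \longrightarrow H \otimes C, \qquad x \otimes_A y \longmapsto x y_1 \otimes \pi(y_2),
\end{equation*}
which is always well-defined and surjective via the antipode, and which is an isomorphism whenever $H$ is faithfully flat over $A$, by a classical theorem of Schneider. Once $\beta$ is bijective, cotensor products $N \square_C H$ can be rewritten in terms of $\otimes_A H$, transporting faithful flatness on one side into faithful coflatness on the other.

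The main obstacle is the upgrade from $\varepsilon_H$ to $\varepsilon_M$ for general $M$ via a resolution argument, together with the Hopf-Galois input of Schneider in the last paragraph; both are the technical substance of \cite{Tak79} and its refinements, and a detailed proof in this context would essentially reproduce that work.
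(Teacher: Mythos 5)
First, a point of comparison: the paper does not actually prove \Cref{th.tak1} --- it is introduced as a paraphrase of \cite[Theorem 1]{Tak79} and stated with its end-of-proof mark precisely because the argument is Takeuchi's. Your proposal is therefore not competing with an argument in the paper but with Takeuchi's original one, and you candidly defer the technical substance there, which is consistent with what the paper itself does. The overall architecture you describe (formal forward implication; counit and unit upgraded from generators; a Galois-type map as the bridge from faithful flatness over $A$ to faithful coflatness over $C$) is the right shape.

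That said, there is one concrete error worth repairing: the canonical map as you wrote it is not well defined on $H\otimes_AH$ when $A$ is a \emph{right} coideal subalgebra and $C=H/HA^+$. Testing the balancing relation, $x\otimes_A ay\mapsto xa_1y_1\otimes\pi(a_2y_2)$; since $\Delta(a)\in A\otimes H$ only guarantees $a_1\in A$ (not $a_2$), and $\pi$ kills elements of $A^+$ only when they are multiplied on the \emph{right}, the term $\pi(a_2y_2)$ does not collapse to $\varepsilon(a_2)\pi(y_2)$. (For $H=\bC[G]$ and $A=\bC[S]$ your formula is balanced only when $S$ is normal.) The map adapted to these conventions is
\begin{equation*}
  \beta:H\otimes_AH\to C\otimes H,\qquad x\otimes_Ay\mapsto \pi(x_1)\otimes x_2y,
\end{equation*}
which is balanced because $\pi(x_1a_1)\otimes x_2a_2y=\varepsilon(a_1)\pi(x_1)\otimes x_2a_2y=\pi(x_1)\otimes x_2ay$, and surjective via $\pi(h)\otimes g=\beta\bigl(h_1\otimes S(h_2)g\bigr)$. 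A smaller caveat: the natural projective generators of $\cM_A^H$ are the objects $V\otimes A$ with $V\in{^f}\cM^H$ (every $M$ is a quotient of $M\otimes A$ via the action map; this is also what the paper uses in \Cref{th.Tannaka_quot}), whereas it is not clear that every object is a quotient of some $V\otimes H$; and since $\bullet/\bullet A^+$ is only right exact while $\bullet\square_CH$ is only left exact, the upgrade from $\varepsilon_H$ to $\varepsilon_M$ needs the coflatness hypothesis already in force to make the composite right exact. None of this changes the fact that the detailed argument is Takeuchi's, but as written the sketch would not assemble into a proof without these repairs.
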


We will also need a dual analogue of \Cref{th.tak1}, in order to address the problem of whether or not $H$ is faithfully flat over $A$. To this end, consider first the adjunction
\begin{equation}\label{eq:adj-bis}
\begin{tikzpicture}[auto,baseline=(current  bounding  box.center)]
  \node[] (1) at (0,0) {${}_A\cM$};
  \node[] (2) at (6,0) {${}^C_H\cM$};
  \draw[->] (1) .. controls (2,.5) and (4,.5) .. node{$M\mapsto H\otimes_AM$} (2);
  \draw[<-] (1) .. controls (2,-.5) and (4,-.5) .. node[below] {${}^{C}N \mapsfrom N$} (2);
\end{tikzpicture}
\end{equation}
obtained as the dual companion to \Cref{eq:adj}: replace module structures with comodule structures, etc. ${}^CN$ stands for
\begin{equation*}
  \{n\in N\ |\ n_{-1}\otimes n_0=\pi(1)\otimes n\},
\end{equation*}
where $n\mapsto n_{-1}\otimes n_0$ is the left $C$-comodule structure on $N$. We can then rephrase \cite[Theorem 2]{Tak79} slightly as follows.

\begin{theorem}\label{th.tak2}
Let $H$ be a Hopf algebra, $\pi:H\to C$ a left $H$-module quotient coalgebra, and $A={}^{\pi}H$.

The adjunction \Cref{eq:adj-bis} is an equivalence if and only if $H$ is a faithfully flat right $A$-module. This is the case whenever $H$ is a faithfully coflat right $C$-comodule. \qedhere  
\end{theorem}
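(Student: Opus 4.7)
The proof mirrors that of the preceding \Cref{th.tak1} with the roles of modules and comodules exchanged, and I would obtain it as a careful dualization of that earlier result. Since this is a paraphrase of \cite[Theorem 2]{Tak79}, I describe only the broad strokes.

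First I would write down the unit and counit of the adjunction \Cref{eq:adj-bis} explicitly: the unit $\eta_M : M \to {}^C(H\otimes_A M)$ sends $m \mapsto 1\otimes_A m$, while the counit $\varepsilon_N : H\otimes_A {}^C N \to N$ is $h\otimes_A n \mapsto hn$. Both are well-defined in view of $A = {}^C H$ by definition and the $H$-module / $C$-comodule compatibility built into ${}^C_H\cM$. They are manifestly isomorphisms on the ``base'' objects $M = A$ and $N = H$ (endowed with its canonical relative Hopf-module structure from $\pi$ and left multiplication).

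For the equivalence $\Longleftrightarrow$ faithful-flatness part: if $H$ is faithfully flat as a right $A$-module, then $H\otimes_A -$ is exact and reflects isomorphisms. Using free presentations of arbitrary $M \in {}_A\cM$ and relative-Hopf-module presentations of arbitrary $N \in {}^C_H\cM$ by cokernels of maps between objects in the essential image of $H\otimes_A -$, one propagates the isomorphism status of $\eta$ and $\varepsilon$ from the base objects $A$ and $H$ to all objects. Conversely, if the adjunction is an equivalence, then $H\otimes_A - : {}_A\cM \to {}^C_H\cM$, composed with the conservative exact forgetful functor to vector spaces, is exact and faithful; hence $H$ is faithfully flat over $A$.

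The more substantive implication is that faithful coflatness of $H$ as a right $C$-comodule entails faithful flatness of $H$ as a right $A$-module. The main obstacle here, and the technical heart of \cite[Theorem 2]{Tak79}, is a Hopf-Galois-type identification allowing one to convert tensor products $\otimes_A$ over the coinvariant subalgebra $A$ into cotensor products $\square_C$ over the quotient coalgebra $C$ (available precisely because $A = {}^\pi H$). Given a test short exact sequence in ${}_A\cM$, one tensors with $H$, rewrites the resulting complex as a cotensor computation over $C$ using this identification, and concludes from faithful coflatness of $H$ over $C$. Working out that reduction rigorously is the crux of the proof, and for it I would appeal directly to Takeuchi's argument.
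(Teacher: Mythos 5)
The paper offers no proof of this statement at all: it is presented as a direct paraphrase of \cite[Theorem 2]{Tak79} (note the \ensuremath{\blacksquare} placed inside the theorem statement), so the ``proof'' is the citation itself. Your sketch of the unit/counit, the easy equivalence $\Leftrightarrow$ faithful-flatness direction, and the deferral to Takeuchi's Galois-type $\otimes_A$-to-$\square_C$ conversion for the coflatness implication is an accurate outline of that cited argument and is consistent with the paper's treatment.
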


In the next section we will show that an inclusion $\iota$ of a right coideal $*$-subalgebra in a CQG algebra gives rise to the following ideal situation: 

The coalgebra $C=H_\iota$ is cosemisimple, the coideal subalgebra we started out with is exactly ${^C}H$, and \Cref{eq:adj} is an equivalence. In order to prove this, we need a Tannakian characterization of the quotient coalgebra $H_\iota$.

Fixing a right coideal subalgebra $\iota:A\to H$, let ${^{ff}}\cM_A^H$ (for `finite free') be the full subcategory of $\cM_A^H$ consisting of objects of the form $V\otimes A$ for some $V\in{^f}\cM^H$. The main result of this section is

\begin{theorem}\label{th.Tannaka_quot}
Let $\cC={^{ff}}\cM_A^H$, and $\omega:\cC\to\cat{Vec}$ the restriction of the functor $\cM_A^H\to\cat{Vec}$ defined by $M\mapsto M/MA^+$. 

Then, the coalgebra $C=H_A$ can be identified with $\cat{coend}(\omega)$. 
\end{theorem}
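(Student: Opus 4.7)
The plan is to describe $\cat{coend}(\omega)$ explicitly via its coequalizer presentation and identify it with $H/HA^+$ using matrix coefficients.

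First I would unpack the data. For $V\otimes A\in\cC$ one has $\omega(V\otimes A)=V\otimes(A/A^+)\cong V$. By freeness of $V\otimes A$ as an $A$-module, every morphism $V\otimes A\to V'\otimes A$ in $\cC$ is uniquely determined by an $H$-comodule map $\phi:V\to V'\otimes A$, and under this bijection $\omega$ sends it to $(\id_{V'}\otimes\varepsilon)\circ\phi:V\to V'$.

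Next I would exploit the standard coequalizer presentation
\[
\cat{coend}(\omega)=\bigl(\bigoplus_V V^*\otimes V\bigr)\big/\!\sim,
\]
where $V$ ranges over finite-dimensional $H$-comodules and $\sim$ identifies $\omega(f)^*(\psi)\otimes v$ with $\psi\otimes\omega(f)(v)$ for every $\cC$-morphism $f$. Restricting to morphisms induced by $H$-comodule maps $V\to V'$ (the case $\phi(V)\subseteq V'\otimes 1$) already recovers the standard Tannakian coend of ${^f}\cM^H\to\cat{Vec}$, which is $H$ itself via the matrix-coefficient map $\psi\otimes v\mapsto\langle\psi,v_0\rangle v_1$. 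So $\cat{coend}(\omega)$ is a further quotient of $H$, and the task reduces to identifying the kernel of $H\twoheadrightarrow\cat{coend}(\omega)$.

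Finally I would compute these extra relations inside $H$. Writing $\phi(v)=\sum_j v'_j\otimes a_j$ and expanding the $H$-colinearity identity $\phi(v_0)\otimes v_1=\sum_j(v'_j)_0\otimes(a_j)_0\otimes(v'_j)_1(a_j)_1$ together with the counit identity $\varepsilon(a_0)a_1=a$, a direct computation shows that the images under the matrix-coefficient map of the two sides of $\omega(\phi)^*(\psi)\otimes v\sim\psi\otimes\omega(\phi)(v)$ differ by
\[
\sum_j\langle\psi,(v'_j)_0\rangle(v'_j)_1\bigl(a_j-\varepsilon(a_j)\bigr),
\]
which manifestly lies in $HA^+$. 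Conversely, every element $ha^+\in HA^+$ with $h=\langle\psi,v'_0\rangle v'_1$ is realized in this form: take $V$ to be the finite-dimensional subcomodule of $V'\otimes A$ generated by $v'\otimes a$ for an $a\in A$ with $a-\varepsilon(a)=a^+$, let $\phi$ be the inclusion, and set $v=v'\otimes a$. The kernel is therefore exactly $HA^+$, yielding $\cat{coend}(\omega)=H/HA^+=C$ as coalgebras, the coalgebra structure on the right being the one descended from the comultiplication of $H$.

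The main obstacle is this final computation: one must show that the relations arising from $\cC$-morphisms span precisely $HA^+$. The ``at least'' direction is given by the explicit subcomodule-inclusion construction, while the ``at most'' direction hinges on the careful bookkeeping above, whereby the $H$-colinearity of $\phi$ combined with the counit identity conspire to ensure the resulting relation never leaves $HA^+$.
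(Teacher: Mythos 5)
Your argument is correct, and its skeleton matches the paper's: both present $\cat{coend}(\omega)$ via the coequalizer/coend construction, use that $V\mapsto V\otimes A$ is bijective on objects and that $\cC$-morphisms $V\otimes A\to V'\otimes A$ correspond to comodule maps $\phi:V\to V'\otimes A$ to obtain a coalgebra surjection $H\twoheadrightarrow \cat{coend}(\omega)$, and then identify its kernel with $HA^{+}$. The difference lies in how the two inclusions are obtained. For $\ker\subseteq HA^{+}$ the paper argues softly: since $M\mapsto M/MA^{+}$ factors through $\cM^{C}$, universality of the coend yields a surjection $\cat{coend}(\omega)\to C$ compatible with the projections from $H$, forcing the kernel into $\ker(H\to C)=HA^{+}$; you instead compute the generating relations explicitly, using colinearity of $\phi$ together with the counit identity to see each one lands in $HA^{+}$. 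For $HA^{+}\subseteq\ker$ the paper first upgrades the projection to a map of left $H$-modules (via the ${}^{f}\cM^{H}$-module-category structure on $\cC$) and then only has to kill $A^{+}$, which it does with the multiplication morphisms $V\otimes A\to A$ for finite-dimensional subcomodules $V\le A$; you use inclusions of finite-dimensional subcomodules of $V'\otimes A$ generated by $v'\otimes a$, which absorbs the factor $h$ into a matrix coefficient and lets you bypass the module-category discussion entirely. Your route is heavier on bookkeeping but makes the kernel completely explicit; the paper's is shorter and makes the $H$-linearity of $\pi$ visible, which it reuses elsewhere. Both are valid proofs of the statement.
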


\begin{remark}
Recall from \Cref{subse.prel_Tannaka} that in order to make rigorous sense of the Tannaka machinery we want $\omega$ to take values in \define{finite} vector spaces. This is clearly the case for $M\mapsto M/MA^+$ restricted to ${^{ff}}\cM_A^H$. 
\end{remark}

\begin{proof}
The morphism $H\to C$ will be reconstructed from the commutative triangle 
\begin{equation}\label{eq:Tannaka_quot_1}
  \begin{tikzpicture}[baseline=(current  bounding  box.center),anchor=base,cross line/.style={preaction={draw=white,-,line width=6pt}}]
    \path (0,0) node (1) {$ {^f}\cM^H$} +(4,0) node (2) {$ \cC$} +(2,-1) node (3) {$ \cat{Vec}$}; 
    \draw[->] (1) -- (2) node[pos=.5,auto]{$\scriptstyle V\mapsto V\otimes A$};
    \draw[->] (1) to[bend right=10] node[pos=.5,auto,swap] {$\scriptstyle \text{forget}$} (3);
    \draw[->] (2) to[bend left=10] node[pos=.5,auto] {$\scriptstyle \omega$} (3);   
  \end{tikzpicture}
\end{equation}
using the functoriality of $\omega\mapsto\cat{coend}(\omega)$ (in the form \Cref{eq:3d_diagram}) and the fact that the coendomorphism coalgebra of $\text{forget}$ in \Cref{eq:Tannaka_quot_1} is nothing but $H$ itself (see e.g. \cite[Theorem 3.8.4]{Par}). 

Denote $C_\omega=\cat{coend}(\omega)$. 

{\bf (a): Constructing a surjective $H$-module map $H\to C_\omega$.} Recall e.g. from \cite[Chapter 3, $\S$4]{Par} that $C_\omega$ is constructed as the coend $\int^\cC\omega^*\otimes \omega$. What this means, concretely, is that $C_\omega$ is put together by first taking the direct sum $\bigoplus_{x\in\cC}\omega(x)^*\otimes\omega(x)$ and then coequalizing diagrams of the form
\begin{equation}\label{eq:Tannaka_quot_2}
  \begin{tikzpicture}[baseline=(current  bounding  box.center),anchor=base,cross line/.style={preaction={draw=white,-,line width=6pt}}]
    \path (0,0) node (1) {$ \omega(y)^*\otimes\omega(x)$} +(4,1) node (2) {$ \omega(y)^*\otimes\omega(y)$} +(4,-1) node (3) {$ \omega(x)^*\otimes\omega(x)$}; 
    \draw[->] (1) to[bend left=10] node[pos=.7,auto] {$\scriptstyle \id\otimes\omega(f)$} (2);
    \draw[->] (1) to[bend right=10] node[pos=.7,auto,swap] {$\scriptstyle \omega(f)^*\otimes \id$} (3);   
  \end{tikzpicture}
\end{equation}
for morphisms $f:x\to y$ in $\cC$ (i.e. modding out by the relations that dictate that the images of all $v\in \omega(y)^*\otimes\omega(x)$ through the two maps agree). The same construction returns $H$ when applied to the left hand arrow in \Cref{eq:Tannaka_quot_1}. 

Since the horizontal arrow in \Cref{eq:Tannaka_quot_1} induces a bijection at the level of objects, the coend construction just described produces the same coproduct $\bigoplus \omega(x)^*\otimes\omega(x)$ when applied to the forgetful functor on ${^f}\cM^H$ instead of $\omega$. However, in this case there might be fewer relations of the form \Cref{eq:Tannaka_quot_2}; this means that the horizontal arrow in \Cref{eq:Tannaka_quot_1} implements a surjection $\pi_\omega: H\to C_\omega$. 

Moreover, note that $\cC$ is a \define{module category} over the monoidal category ${^f}\cM^H$ in the sense of e.g. \cite[Definition 2.6]{Ost03}: For every $V\in{^f}\cM^H$ and $M\in \cC$ we can construct the  object $V\otimes M$ with tensor product $H$-coaction and right hand $A$-action, $(V,M)\mapsto V\otimes M$ is compatible with the associativity constraint of ${^f}\cM^H$, etc. (see loc. cit. for details). 

The horizontal arrow in \Cref{eq:Tannaka_quot_1} is compatible with the ${^f}\cM^H$-module category structures on the domain and codomain, which upon applying the Tannakian formalism translates to $\pi_\omega:H\to C_\omega$ being a map of left $H$-modules.

{\bf (b): $\pi_\omega$ annihilates $HA^+$.} Since we already know $\pi_\omega$ is a map of left $H$-modules, it is enough to show that it annihilates $A^+$. 

Let $i:V\le A$ be a finite-dimensional sub-$H$-comodule. The composition
\begin{equation}\label{eq:Tannaka_quot_2.5}
 \begin{tikzpicture}[baseline=(current  bounding  box.center),anchor=base,cross line/.style={preaction={draw=white,-,line width=6pt}}]
    \path (0,0) node (1) {$ V\otimes A$} +(3,0) node (2) {$ A\otimes A$} +(6,0) node (3) {$ A$}; 
    \draw[->] (1) to node[pos=.5,auto] {$\scriptstyle i\otimes \id$} (2);
    \draw[->] (2) to node[pos=.5,auto] {$\scriptstyle \text{multiply}$} (3);
    \draw[->] (1) to[bend right=20] node[pos=.5,auto,swap] {$ f$} (3);
  \end{tikzpicture}
\end{equation}  
is then a morphism in $\cC$, and setting $x=V\otimes A$ and $y=A$ in \Cref{eq:Tannaka_quot_2} (with $f$ as defined in \Cref{eq:Tannaka_quot_2.5}) produces the diagram 
\begin{equation}\label{eq:Tannaka_quot_2.7}
  \begin{tikzpicture}[baseline=(current  bounding  box.center),anchor=base,cross line/.style={preaction={draw=white,-,line width=6pt}}]
    \path (0,0) node (1) {$ V$} +(3,.5) node (2) {$ \bC$} +(3,-.5) node (3) {$ V^*\otimes V$}; 
    \draw[->] (1) to[bend left=10] node[pos=.7,auto] {$\scriptstyle \id\otimes\omega(f)$} (2);
    \draw[->] (1) to[bend right=10] node[pos=.7,auto,swap] {$\scriptstyle \omega(f)^*\otimes \id$} (3);   
  \end{tikzpicture}
\end{equation} 
The upper arrow is just the restriction of $\varepsilon$ to $V$, while the lower arrow followed by the canonical map $V^*\to V\to H$ (obtained from $V\to V\otimes H$ by moving the right hand $V$ to the left) is nothing but the inclusion of $V$ in $H$. This means that $H\to C_\omega$ annihilates $\ker(\varepsilon|_V)$, and the conclusion follows from the fact that $A$ is the union of its finite-dimensional sub-comodules.

{\bf (c): Identification of $C$ and $C_\omega$.} By construction, the functor $M\mapsto M/MA^+$ on $\cM_A^H$ factors through $\cM^C$ by \Cref{eq:adj}, so we have a 2-commutative diagram
\begin{equation}\label{eq:Tannaka_quot_3}
  \begin{tikzpicture}[baseline=(current  bounding  box.center),anchor=base,cross line/.style={preaction={draw=white,-,line width=6pt}}]
    \path (0,0) node (1) {$ \cC$} +(3,0) node (2) {$ {^f}\cM^C$} +(1.5,-1) node (3) {$ \cat{Vec}$}; 
    \draw[->] (1) -- (2);
    \draw[->] (1) to[bend right=10] node[pos=.5,auto,swap] {$\scriptstyle \omega$} (3);
    \draw[->] (2) to[bend left=10] node[pos=.5,auto] {$\scriptstyle \text{forget}$} (3);   
  \end{tikzpicture}
\end{equation}
which by the universality of the $\cat{coend}$ construction induces a map $C_\omega\to C$. Precomposing with the surjection $\pi_\omega:H\to C_\omega$ from above produces exactly $\pi:H\to C$, meaning that $C_\omega\to C$ is surjective. However, we showed above that the kernel of $\pi:H\to C$ is already annihilated by $\pi_\omega$. It follows that $C_\omega\to C$ is an isomorphism, and $\pi_\omega$ can be identified with $\pi$. 
\end{proof}

\begin{remark}\label{re.alt_thm}
With only minor modifications to the proof we could have substituted for ${^{ff}}\cM_A^H$ either the category ${^{fp}}\cM_A^H$ (for `finitely presented') of $H$-equivariant $A$-modules which admit a presentation by objects in ${^{ff}}\cM_A^H$, or the category $^{fg}\cM_A^H$ of those that are finitely generated as $A$-modules. 
\end{remark}

Before stating the following result, recall from \Cref{se.prel} that for us a semisimple $\bC$-linear category is one in which the endomorphism algebra of every object is semisimple and finite-dimensional.

\begin{corollary}\label{cor.Tannaka_quot}
If $\cC={^{ff}}\cM_A^H$ is semisimple, then $C=H_A$ is cosemisimple and the adjunctions \Cref{eq:adj,eq:adj-bis} are equivalences.
\end{corollary}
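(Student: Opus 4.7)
The plan is to exploit \Cref{th.Tannaka_quot}, which identifies $C=H_A$ with $\cat{coend}(\omega)$. The coend is constructed as a quotient of $\bigoplus_{M\in\cC}\omega(M)^*\otimes\omega(M)$ by the relations imposed by morphisms of $\cC$. Under the semisimplicity hypothesis each object of $\cC$ decomposes into simples with $\End(S)=\bC$, and the coend collapses to $\bigoplus_{[S]}\omega(S)^*\otimes\omega(S)$, exhibiting $C$ as a direct sum of matrix coalgebras and hence as cosemisimple.

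With $C$ cosemisimple, the category $\cM^C$ is semisimple, so every $C$-comodule is coflat, both on the left and on the right. Faithful coflatness of $H$ follows from the surjectivity of $\pi:H\to C$: every simple subcoalgebra of $C$ lifts to $H$, forcing every simple $C$-comodule to appear as a subcomodule of $H$, which gives faithfulness of $\bullet\,\square_C\,H$ (and its right-handed analogue). \Cref{th.tak1} then immediately yields that \Cref{eq:adj} is an equivalence.

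For \Cref{eq:adj-bis}, \Cref{th.tak2} requires the identification $A={}^{\pi}H$. The inclusion $A\subseteq{}^{\pi}H$ is automatic from the coideal property: for $a\in A$ one has $a_1\in A$, hence $\pi(a_1)=\varepsilon(a_1)\pi(1)$, and therefore $\pi(a_1)\otimes a_2=\pi(1)\otimes a$. For the reverse inclusion I would trace $A\in\cM^H_A$ through the equivalence \Cref{eq:adj}: it maps to $A/AA^+=A/A^+=\bC$ (the trivial $C$-comodule) and then back to $\bC\,\square_C\,H={}^{\pi}H$, with the unit of the adjunction at $A$ being the inclusion $A\hookrightarrow{}^{\pi}H$; as an arrow in an equivalence, it must be an isomorphism. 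Applying \Cref{th.tak2} with $A={}^{\pi}H$ and the right-handed faithful coflatness established above gives that \Cref{eq:adj-bis} is likewise an equivalence.

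The main technical subtlety is the first step: $\cC={^{ff}}\cM_A^H$ is not assumed to be abelian, only semisimple in the weak sense that its endomorphism algebras are semisimple and finite-dimensional. One has to argue that the coend presentation still collapses to an isotypic decomposition, most cleanly by passing to the idempotent completion of $\cC$, or equivalently by working with one of the better-behaved variants ${^{fp}}\cM_A^H$ or ${^{fg}}\cM_A^H$ from \Cref{re.alt_thm}.
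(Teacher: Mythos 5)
Your argument is correct, but it reaches the two equivalences by a genuinely different route than the paper. For cosemisimplicity of $C$ you both pass through the idempotent completion $\ol{\cC}$; the paper then invokes exactness and faithfulness of the extended fiber functor (via \Cref{le.faith}) together with \cite[Theorem 2.2.8]{Sch92} to get an equivalence $\ol{\cC}\simeq{^f}\cM^C$, whereas you compute the coend directly as a sum of matrix coalgebras $\bigoplus_{[S]}\omega(S)^*\otimes\omega(S)$ --- which indeed gives cosemisimplicity without ever needing \Cref{le.faith}. The bigger divergence is in proving that \Cref{eq:adj} is an equivalence: the paper builds it by hand, identifying $\ol{\cC}$ with ${^{fg}}\cM_A^H$ and then extending ${^{fg}}\cM_A^H\simeq{^f}\cM^C$ to the full categories via $\aleph_0$-free cocompletions, while you short-circuit all of this by feeding faithful coflatness of $H$ over the cosemisimple $C$ (faithfulness coming from the surjectivity of $\pi$, which makes $C$ a direct summand of $H$ as a $C$-comodule on either side) into the ``if and only if'' of \Cref{th.tak1}. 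Your route is shorter and leans more heavily on Takeuchi's theorems; the paper's route has the side benefit of producing the explicit equivalence ${^{fg}}\cM_A^H\simeq{^f}\cM^C$ that is reused later (e.g.\ in \Cref{cor.Tannaka_quot_expect} and \Cref{th.corr}). Finally, you are more scrupulous than the paper's own proof on one point: applying \Cref{th.tak2} requires knowing $A={}^{\pi}H$, and your derivation of this from the unit of the adjunction at $A$ is exactly the argument the paper only supplies later, in part (3) of \Cref{th.corr}. No gaps.
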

\begin{proof}
{\bf (1): $C$ is cosemisimple.} Let $\ol{\cC}$ be the idempotent completion of $\cC$ in the abelian category $\cM_A^H$: It is the full subcategory of $\cM_A^H$ consisting of all objects which are retracts of some object in $\cC$. More abstractly, $\ol{\cC}$ can be identified with the \define{Karoubi envelope} of $\cC$, i.e. the category obtained by universally splitting idempotent endomorphisms in $\cC$; see \cite[$\S$6]{BarMor06} for a brief recollection. 

Since $\cC$ is semisimple, it is easy to see that $\ol{\cC}$ is semisimple and abelian. To elaborate, recall for instance from the discussion in \cite[$\S$2.1]{mug1} that one way to phrase this claim is as the conjunction of the following conditions:
\begin{itemize}
\item $\ol{\cC}$ admits direct sums;
\item idempotent morphisms split in $\ol(\cC)$ (this is called ``having subobjects'' in \cite[$\S1$]{mug1}); 
\item there are objects $X_i\in \ol{\cC}$ such that
\begin{equation*}
  \dim \mathrm{Hom}(X_i,X_j) = \delta_{ij}
\end{equation*}
and for every two objects $Y,Z\in \ol{\cC}$ the canonical map
\begin{equation*}
  \bigoplus_i \mathrm{Hom}(Y,X_i)\otimes \mathrm{Hom}(X_i,Z) \to \mathrm{Hom}(Y,Z)
\end{equation*}
is an isomorphism.
\end{itemize}
The first two conditions obviously hold. As for the third, the $X_i$ will be the summands of arbitrary objects $X\in \ol{\cC}$ splitting the minimal idempotents of the finite product $\mathrm{End}(X)$ of matrix algebras. 

The functor $\omega:\cC\to\cat{Vec}$ from \Cref{th.Tannaka_quot} extends to $\ol{\cC}$ by the universality of the Karoubi envelope, and the extension $\ol{\omega}$ is exact by semisimplicity because $M\mapsto M/MA^+$ is right exact on $\cM_A^H$. Finally, $\omega$ is faithful by \Cref{le.faith} below, and hence so is $\ol{\omega}$. 

In conclusion, we have an exact, faithful functor from the abelian $\bC$-linear category $\ol{\cC}$ to ${^f}\cat{Vec}$. This implies (\cite[Theorem 2.2.8]{Sch92}) that the canonical functor $\ol{\cC}\to{^f}\cM^{\cat{coend}(\ol{\omega})}$ is an equivalence, and hence $\cat{coend}(\ol{\omega})$ is cosemisimple. We leave it to the reader to verify that passing to the idempotent completion does not affect coendomorphism coalgebras, so that $\cat{coend}(\ol{\omega})\cong C$.   

{\bf (2): The adjunction \Cref{eq:adj} is an equivalence.} We already know from the proof of part (1) that $M\mapsto M/MA^+$ is an equivalence of $\ol{\cC}$ onto ${^f}\cM^C$. I first claim that $\ol{\cC}$ is exactly the full subcategory ${^{fg}}\cM_A^H$ of $\cM_A^H$ consisting of objects that are finitely generated as $A$-modules. 

{\bf (a): Proof of the claim.} We have to prove that an arbitrary object $M\in{^{fg}}\cM_A^H$ is in $\ol{\cC}$. By finite generation, there is an epimorphism $p:V\otimes A\to M$ in $\cM_A^H$ for some finite-dimensional $H$-comodule $V$. 

Let $K\le V\otimes A$ be the kernel of $p$. Like any $H$-equivariant $A$-module, $K$ is a union of objects in $^{fg}\cM_A^H$, i.e. a union of ranges of maps $q_i:W_i\otimes A\to V\otimes A$ for $W_i\in{^f}\cM^H$. These ranges are all in the abelian category $\ol{\cC}$, and they cannot increase indefinitely because every object in $\ol{\cC}$ has finite length by semisimplicity. It follows that $K$ is an object of $\ol{\cC}$, and hence so is $M=(V\otimes A)/K$ (it is simply the kernel of a retract of $V\otimes A$ onto $K$ in the semisimple abelian category $\ol{\cC}$).   

{\bf (b): End of the proof of part (2).} We now have an equivalence 
\begin{equation}\label{eq:free_coco}
 \begin{tikzpicture}[baseline=(current  bounding  box.center),anchor=base,cross line/.style={preaction={draw=white,-,line width=6pt}}]
    \path (0,0) node (1) {${^{fg}}\cM_A^H$} +(3.5,0) node (2) {${^f}\cM^C$}; 
    \draw[->] (1) to node[pos=.5,auto] {$\scriptstyle M\mapsto M/MA^+$} node[pos=.5,auto,swap] {$\simeq$} (2);
  \end{tikzpicture} 
\end{equation}
The whole category $\cM_A^H$ can be recovered from ${^{fg}}\cM_A^H$ as its \define{$\aleph_0$-free cocompletion}. Recall \cite[1.44]{AdaRos94} that this means that $\cM_A^H$ is cocomplete and the inclusion functor ${^{fg}}\cM_A^H\to\cM_A^H$ is $\bC$-linear, right exact, and universal with these properties: Any right exact $\bC$-linear functor from $^{fg}\cM_A^H$ into a cocomplete $\bC$-linear category $\cD$ factors uniquely (up to natural isomorphism) through a cocontinuous functor $\cM_A^H\to \cD$. 

Similarly, $\cM^C$ can be shown to be the $\aleph_0$-free cocompletion of ${^f}\cM^C$. Since $M\mapsto M/MA^+$ is right exact from ${^{fg}}\cM_A^H$ into $\cM^C$, \Cref{eq:free_coco} extends to the desired equivalence $\cM_A^H\simeq \cM^C$.  

{\bf (3): The adjunction \Cref{eq:adj-bis} is an equivalence.} This is immediate from the fact that $C$ is cosemisimple and the last statement in \Cref{th.tak2}: indeed, the cosemisimplicity of $C$ entails the faithful coflatness of the quotient $H\to C$, be it as a left or a right $C$-comodule. 
\end{proof}

\begin{lemma}\label{le.faith}
In the setting of \Cref{th.Tannaka_quot} the functor $\omega:\cC\to \cat{Vec}$ is faithful.  
\end{lemma}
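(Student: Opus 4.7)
The plan is to unpack what a morphism in $\cC = {}^{ff}\cM_A^H$ looks like and reduce faithfulness of $\omega$ to the injectivity of a very concrete map built from the Hopf algebra structure of $H$. A morphism $f : V \otimes A \to W \otimes A$ in $\cC$ is right $A$-linear, so it is determined by its restriction $\tilde f : V \to W \otimes A$ to $V \otimes 1$, and this restriction must be $H$-colinear when $W \otimes A$ carries the diagonal coaction (tensoring the coactions on $W$ and on $A \subseteq H$). Under the canonical identification $(V \otimes A)/(V \otimes A)A^+ \cong V$ given by $v \otimes a \mapsto v\,\varepsilon(a)$, one sees immediately that $\omega(f) = (\id_W \otimes \varepsilon)\circ \tilde f$. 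So it will be enough to show that an $H$-colinear $\tilde f : V \to W \otimes A$ satisfying $(\id_W \otimes \varepsilon)\circ \tilde f = 0$ must vanish.

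Writing $\tilde f(v) = \sum_i w_i \otimes a_i$, with $w_i, a_i$ depending on $v$, the colinearity condition reads
\[
\sum_i (w_i)_{(0)} \otimes (a_i)_{(1)} \otimes (w_i)_{(1)}(a_i)_{(2)} \;=\; \sum \tilde f(v_{(0)}) \otimes v_{(1)}
\]
in $W \otimes A \otimes H$, where the middle tensor factor on the left genuinely lands in $A$ thanks to the coideal condition $\Delta(A) \subseteq A \otimes H$. I would then apply $\id_W \otimes \varepsilon_A \otimes \id_H$ to both sides: using $\varepsilon(a_{(1)}) a_{(2)} = a$, the left-hand side collapses to $\sum_i (w_i)_{(0)} \otimes (w_i)_{(1)} a_i$, while the right-hand side becomes $\sum \omega(f)(v_{(0)}) \otimes v_{(1)}$. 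When $\omega(f) = 0$ this last expression is zero, leaving the identity
\[
\sum_i (w_i)_{(0)} \otimes (w_i)_{(1)} a_i \;=\; 0 \quad \text{in } W \otimes H.
\]

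The final step is to recognize that the map $\phi : W \otimes A \to W \otimes H$ defined by $w \otimes a \mapsto w_{(0)} \otimes w_{(1)} a$ is injective. Indeed, $\phi$ factors as the inclusion $\id_W \otimes \iota : W \otimes A \hookrightarrow W \otimes H$ followed by the $\bC$-linear automorphism $\Phi$ of $W \otimes H$ given by $w \otimes h \mapsto w_{(0)} \otimes w_{(1)} h$, whose inverse is $w \otimes h \mapsto w_{(0)} \otimes S(w_{(1)}) h$ (a direct verification from the antipode and counit axioms). The displayed identity is exactly $\phi(\tilde f(v)) = 0$, forcing $\tilde f(v) = 0$ for every $v$, hence $f = 0$. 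The only real delicacy is clean bookkeeping with the coideal condition so as to apply $\varepsilon_A$ to the correct tensor slot, and the observation that $\phi$ requires an honest antipode on the ambient $H$ — so the argument crucially uses that $A$ sits inside a Hopf algebra rather than just a bialgebra.
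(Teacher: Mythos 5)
Your proof is correct, and it takes a more hands-on route than the paper's. The paper factors $\omega$ as $M\mapsto M\otimes_AH$ followed by the Sweedler equivalence $\cM_H^H\simeq \cat{Vec}$, $N\mapsto N/NH^+$, and then observes that the first functor acts on hom-spaces as the (manifestly injective) inclusion $\hom_{\cM^H}(W^*\otimes V,A)\subseteq \hom_{\cM^H}(W^*\otimes V,H)$; the fundamental theorem of Hopf modules is used as a black box. You instead inline the relevant injectivity: your automorphism $\Phi(w\otimes h)=w_{(0)}\otimes w_{(1)}h$ of $W\otimes H$, with inverse built from the antipode, is precisely the untwisting isomorphism that powers Sweedler's theorem, and your identity $\phi(\tilde f(v))=\omega(f)(v_{(0)})\otimes v_{(1)}$ shows directly that $\tilde f$ (hence $f$) is determined by $\omega(f)$. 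What your version buys is self-containedness and an explicit reconstruction formula at the cost of some Sweedler-notation bookkeeping; what the paper's version buys is brevity and a packaging ($\hom_\cC(V\otimes A,W\otimes A)\cong\hom_{\cM^H}(W^*\otimes V,A)$) that is reused verbatim in the proof of Theorem~\ref{th.corr}. Your closing remark that the antipode is genuinely needed is accurate and matches the paper's implicit reliance on it through the Hopf-module theorem.
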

\begin{proof}
Let $V,W$ be finite-dimensional $H$-comodules, so that $V\otimes A$ and $W\otimes A$ are objects of $\cC={^{ff}}\cM_A^H$. We have
\begin{equation}\label{eq:Cmorphisms}
 \hom_{\cC}(V\otimes A,W\otimes A) \cong\hom_{\cM^H}(V,W\otimes A)\cong \hom_{\cM^H}(W^*\otimes V,A). 
\end{equation}
Now define $\cM_H^H$ analogously to $\cM_A^H$; it is the category of right Hopf modules over $H$, and by \cite[Theorem 4.1.1]{Swe69} it is equivalent to $\cat{Vec}$ via $N\mapsto N/NH^+$. Consequently, the functor $\omega:\cC\to\cat{Vec}$ factors as 
\[
 \begin{tikzpicture}[baseline=(current  bounding  box.center),anchor=base,cross line/.style={preaction={draw=white,-,line width=6pt}}]
    \path (0,0) node (1) {$\cC$} +(4,0) node (2) {$\cM_H^H$} +(2,-1.5) node (3) {$\cat{Vec}$}; 
    \draw[->] (1) -- (2) node[pos=.5,auto] {$\scriptstyle M\mapsto M\otimes_AH$};
    \draw[->] (1) to[bend right=10] node[pos=.5,auto,swap] {$\scriptstyle \omega$} (3);
    \draw[->] (2) to[bend left=10] node[pos=.5,auto] {$\scriptstyle N\mapsto N/NH^+$} node[pos=.5,auto,swap] {$\scriptstyle \simeq$}(3);   
  \end{tikzpicture} 
\]
and the horizontal arrow in this diagram acts on hom spaces simply as the inclusion
\[
 \hom_{\cM^H}(W^*\otimes V,A)\le \hom_{\cM^H}(W^*\otimes V,H),
\] 
hence the conclusion. 
\end{proof}

\begin{corollary}\label{cor.Tannaka_quot_expect}
If $\cC={^{ff}}\cM_A^H$ is semisimple the inclusion $A\to H$ splits as a right $A$-module and right $H$-comodule map. 
\end{corollary}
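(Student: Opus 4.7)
The plan is to invoke the equivalence $\Phi\colon\cM_A^H\to\cM^C$, $M\mapsto M/MA^+$ supplied by \Cref{cor.Tannaka_quot}, and to reduce the problem to producing a retraction in the cosemisimple category $\cM^C$.

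First I would identify what the inclusion $\iota\colon A\to H$ becomes after applying $\Phi$. The object $A$ (regarded as an object of $\cM_A^H$ via right multiplication and the restricted comultiplication) satisfies $AA^+=A^+$ since $A^+$ is a two-sided ideal of $A$, so $\Phi(A)=A/A^+\cong\bC$ via $\varepsilon|_A$; chasing through the induced right $C$-coaction one finds $1\mapsto 1\otimes\pi(1)$, where $\pi\colon H\to C$ is the quotient map. Because $\pi(1)$ is group-like in $C$, this is simply the trivial one-dimensional right $C$-comodule. On the other hand, $\Phi(H)=H/HA^+=C$ carries its regular right $C$-comodule structure, and $\Phi(\iota)\colon\bC\to C$ is the map $1\mapsto\pi(1)$, which is injective because $\varepsilon_C(\pi(1))=1$.

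Next, \Cref{cor.Tannaka_quot} also tells us that $C$ is cosemisimple, so every subcomodule of $C$ admits a complement in $\cM^C$. In particular the simple subcomodule $\bC\cdot\pi(1)\subseteq C$ does, yielding a right $C$-comodule retraction $r\colon C\to\bC$ of $\Phi(\iota)$. Applying the quasi-inverse $N\mapsto N\square_C H$ of $\Phi$ to $r$ then produces the desired morphism $E\colon H\to A$, which is by construction a right $A$-module and right $H$-comodule map splitting $\iota$.

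I do not anticipate any serious obstacle here: the substance is packaged in \Cref{cor.Tannaka_quot}, and what remains is bookkeeping. The only point that calls for a moment's care is verifying that the induced right $C$-coaction on $A/A^+$ really is the trivial one and that $\Phi(\iota)$ is injective; both assertions fall out of the group-like identity $\Delta_C(\pi(1))=\pi(1)\otimes\pi(1)$ together with $\varepsilon_C(\pi(1))=1$.
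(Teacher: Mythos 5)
Your argument is correct and follows essentially the same route as the paper: identify the image of $1\in H$ in $C$ as spanning a one-dimensional subcomodule, split its inclusion using the cosemisimplicity of $C$, and transport the retraction back through the equivalence of \Cref{cor.Tannaka_quot}. The extra bookkeeping you supply (that $\Phi(A)\cong\bC$ is the trivial comodule via the group-like $\pi(1)$, and that $\Phi(\iota)$ is injective) is accurate and merely makes explicit what the paper leaves implicit.
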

\begin{proof}
Denote by $1\in C$ the image of $1\in H$ through the surjection $H\to C$. It spans a one-dimensional subcomodule $\langle 1\rangle$ of $C$, whose inclusion in $C$ splits because the latter is cosemisimple. The conclusion follows by transporting this situation over to $\cM_A^H$ via the equivalence provided by \Cref{cor.Tannaka_quot}. 
\end{proof}

\begin{remark}\label{re.e}
The retraction $E:H\to A$ in $\cM_A^H$ can be constructed more concretely as follows: Let $h_C:C\to \bC=\langle 1\rangle$ be a right comodule map splitting the inclusion $\langle 1\rangle\le C$, and set
\begin{equation}\label{eq:Tannaka_quot_expect}
  \begin{tikzpicture}[baseline=(current  bounding  box.center),anchor=base,cross line/.style={preaction={draw=white,-,line width=6pt}}]
    \path (0,0) node (1) {$H$} +(2,0) node (2) {$H\otimes H$} +(4.5,0) node (3) {$C\otimes H$} +(6.7,0) node (4) {$H$.}; 
    \draw[->] (1) to node[pos=.5,auto] {$\scriptstyle \Delta$} (2);
    \draw[->] (2) to node[pos=.5,auto] {$\scriptstyle \pi\otimes\id$} (3);
    \draw[->] (3) to node[pos=.5,auto] {$\scriptstyle h_C\otimes\id$} (4);
    \draw[->] (1) to[bend right=20] node[pos=.5,auto,swap] {$\scriptstyle E$} (4);
  \end{tikzpicture}
\end{equation}
It can be shown that the image of $E$ is actually $A$, and $E|_A=\id_A$. 
\end{remark}

\section{A correspondence between coideal $*$-subalgebras and quotient $*$-coalgebras}\label{se.corr}

In this section we apply the results of the previous one to the case when $H$ is a CQG algebra and $\iota:A\to H$ is a right coideal subalgebra closed under $*$. With these assumptions in place throughout the section, the main result reads as follows.

\begin{theorem}\label{th.corr} 
 \begin{enumerate}
\renewcommand{\labelenumi}{(\arabic{enumi})} 
  \item The category ${^{ff}}\cM_A^H$ is semisimple, and hence \Cref{cor.Tannaka_quot} applies. 
  \item $C=H_A$ is a $*$-coalgebra in the sense of \Cref{def.*coalg} such that the quotient map $\pi:H\to C$ intertwines the $*$-structure of $C$ and the map $x\mapsto (Sx)^*$ on $H$.
  \item The coideal subalgebra $A$ can be recovered as ${^C}H$ 
 \end{enumerate}
\end{theorem}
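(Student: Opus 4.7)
The three parts will be established in order: (1) gives the semisimplicity needed to invoke \Cref{cor.Tannaka_quot}, (2) upgrades the Tannakian output to a $*$-coalgebra, and (3) recovers $A$ from $C$ via the resulting equivalence of module categories.

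For (1), I equip $\cC={^{ff}}\cM_A^H$ with a $*$-structure induced by the CQG structure on $H$. Fix a compatible inner product on each finite-dimensional $H$-comodule $V$; the free right $A$-module $V\otimes A$ then carries the $A$-valued pairing
\begin{equation*}
\langle v\otimes a,w\otimes b\rangle:=\langle v,w\rangle_V\,a^*b,
\end{equation*}
using that $A$ is closed under $*$. An $A$-linear endomorphism of $V\otimes A$ is an $n\times n$ matrix over $A$ with $n=\dim V$, and its matrix adjoint defines a $*$-operation on $\mathrm{End}_{\cM_A}(V\otimes A)$. Unitarity of $V$ combined with $*$-closure of $A$ inside $H$ ensures that this adjoint preserves $H$-equivariance, so $\mathrm{End}_\cC(V\otimes A)$ is a $*$-subalgebra. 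A duality computation in the spirit of \Cref{le.faith} identifies
\begin{equation*}
\mathrm{End}_\cC(V\otimes A)\cong\hom_{\cM^H}(V\otimes V^*,A)\subseteq\hom_{\cM^H}(V\otimes V^*,H),
\end{equation*}
and the latter is finite-dimensional by cosemisimplicity of $H$. Since $A$ embeds into the $C^*$-algebra $H_u$, we get $\mathrm{End}_\cC(V\otimes A)$ as a finite-dimensional $*$-subalgebra of $M_n(H_u)$, hence itself a $C^*$-algebra and in particular semisimple; \Cref{cor.Tannaka_quot} then applies.

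For (2), the $*$-structure on $\cC$ makes the fiber functor $\omega:V\otimes A\mapsto V$ into a $*$-functor to finite-dimensional Hilbert spaces, so the Tannakian formalism recalled in \Cref{subse.prel_Tannaka} equips $C=\cat{coend}(\omega)$ (identified with $H_A$ by \Cref{th.Tannaka_quot}) with a canonical $*$-coalgebra structure. To identify $\pi$, consider the $*$-functor $F:{_u}{^f}\cM^H\to\cC$, $V\mapsto V\otimes A$; the composite $\omega\circ F$ is the ordinary forgetful functor on ${_u}{^f}\cM^H$, whose coendomorphism coalgebra is $H$ with $*$-operation $x\mapsto(Sx)^*$ as recalled just before \Cref{se.Tannaka}. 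By functoriality of Tannaka reconstruction in the $*$-category setting (\Cref{eq:3d_diagram}), the induced $*$-coalgebra map $H\to C$ is precisely $\pi$. Finally, for (3), the inclusion $A\subseteq{^C}H$ follows directly from $\Delta(a)\in A\otimes H$ combined with $\pi(a)=\varepsilon(a)\pi(1)$ for $a\in A$ (since $A^+\subseteq HA^+=\ker\pi$). For the reverse, use the equivalence $\cM_A^H\simeq\cM^C$ of \Cref{cor.Tannaka_quot}: under $M\mapsto M/MA^+$ the object $A$ goes to $\bC$, and $\bC\in\cM^C$ is sent by the quasi-inverse $N\mapsto N\square_C H$ to $\bC\square_CH={^C}H$. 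Thus $A$ and ${^C}H$ both correspond to $\bC\in\cM^C$; the inclusion $A\hookrightarrow{^C}H$ descends to a map $\bC\to\bC$ sending $[1]$ to $[1]$ (nonzero since $\varepsilon(1)=1$), hence an isomorphism, and full faithfulness of the equivalence promotes it to an isomorphism in $\cM_A^H$, giving $A={^C}H$ as subsets of $H$.

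The main obstacle is the verification in (1) that matrix adjoints preserve $H$-equivariance: this is where unitarity of $V$ and $*$-closure of $A$ must interlock in an explicit calculation with the coaction matrix of $V$. Once that foundational bookkeeping is in place, the rest of (1) and all of (2) and (3) follow smoothly from the Tannakian machinery and the resulting equivalence of comodule categories.
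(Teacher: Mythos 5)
Your argument is correct, and parts (2) and (3) follow the paper's route essentially verbatim (the $*$-functor to ${^f}\cat{Hilb}$ inducing the $*$-coalgebra structure on $\cat{coend}(\omega)$, and the identification of $A$ and ${}^CH$ via the equivalence of \Cref{cor.Tannaka_quot} through their common image $\bC\in\cM^C$). Where you genuinely diverge is part (1). The paper defines the $*$-operation on $\End_{\cC}(V\otimes A)\cong\hom_{\cM^H}(V^*\otimes V,A)$ abstractly, via the conjugate-comodule functor and the isomorphisms $\ol{V}\cong V^*$ coming from compatible inner products, and then identifies this operation with the Hilbert-space adjoint on $\End(V)\cong\hom_{\cM^H}(V^*\otimes V,H)$ using Sweedler's Hopf-module theorem; semisimplicity follows because $\End_\cC(V\otimes A)$ sits as a $*$-subalgebra of the $C^*$-algebra $\End(V)$. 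You instead work with the concrete matrix picture $\End_{\cM_A}(V\otimes A)\cong M_n(A)$ and embed into $M_n(H_u)$; this is more elementary and avoids the Tannakian unpacking of the adjoint, at the cost of having to check by hand that the matrix adjoint preserves $H$-colinearity --- precisely the step you flag but do not carry out. That step does go through, and cleanly: choosing an orthonormal basis of $V$ with unitary coaction matrix $u\in M_n(H)$, colinearity of $\tau=(t_{ij})\in M_n(A)$ is the identity $\Delta(\tau)=u^*(\tau\otimes 1)u$ in $M_n(A\otimes H)$ (entrywise $\Delta$, with $\Delta(A)\subseteq A\otimes H$); applying the canonical $*$ of $M_n(A\otimes H)$ and using that $\Delta$ is a $*$-map and that $A$ is $*$-closed gives $\Delta(\tau^*)=u^*(\tau^*\otimes 1)u$, which is colinearity of $\tau^*$. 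With that computation supplied, your embedding of $\End_\cC(V\otimes A)$ as a finite-dimensional $*$-subalgebra of the $C^*$-algebra $M_n(H_u)$ yields semisimplicity, and the rest of your argument is sound. Note also that your finite-dimensionality claim is better justified by $\hom_{\cM^H}(W,H)\cong W^*$ (the Hopf-module/cofreeness argument the paper uses in \Cref{le.faith}) than by a bare appeal to cosemisimplicity.
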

\begin{proof}
{\bf (1)} Let $V\in{^f}\cM^H$. It is enough to show that the finite-dimensional complex algebra $\End(V\otimes A)$ of endomorphisms in ${^{ff}}\cM_A^H$ can be made into a $C^*$-algebra. 

As in \Cref{eq:Cmorphisms}, note that the endomorphism algebra in question is isomorphic to 
\[
 \hom_{\cM^H}(V,V\otimes A)\cong\hom_{\cM^H}(V^*\otimes V,A).
\]
The composition of maps $f,g$ in $\hom_{\cM^H}(V^*\otimes V,H)$ is  
\begin{center}
  \begin{tikzpicture}[>=stealth',shorten >=1pt,auto,node distance=3cm,thick,connect/.style={circle,fill=blue!20,draw,font=\sffamily\small}]  
    \node (11) at (.2,.5) {$V^*$};
    \node (11.12) at (.2,0) {$\otimes^{\ }$};
    \node (12) at (.2,-.5) {$V^{\ }$};

    \node (cong) at (1,0) {$\cong$};

    \node (21) at (2,1) {$V^*$};
    \node (21.22) at (2,.5) {$\otimes^{\ }$};
    \node (22) at (2,0) {$\bC^{\ }$};
    \node (22.23) at (2,-.5) {$\otimes^{\ }$};
    \node (23) at (2,-1) {$V^{\ }$};

    \node[connect] (coev) at (3,0) {};

    \node (31) at (4,1.5) {$V^*$};
    \node (31.32) at (4,1) {$\otimes^{\ }$};
    \node (32) at (4,.5) {$V^{\ }$};
    \node (32.33) at (4,0) {$\otimes^{\ }$};
    \node (33) at (4,-.5) {$V^*$};
    \node (33.34) at (4,-1) {$\otimes^{\ }$};
    \node (34) at (4,-1.5) {$V^{\ }$};

    \node[connect] (f) at (5,1) {$\scriptstyle f$};
    \node[connect] (g) at (5,-1) {$\scriptstyle g$};

    \node (41) at (6,1) {$A$};
    \node (41.42) at (6,0) {$\otimes$};
    \node (42) at (6,-1) {$A$};

    \node[connect] (mult) at (7,0) {};

    \node (51) at (8,0) {$A$};

    \path[]
     (22) edge (coev)
     (coev) edge [->,bend left] (32)
            edge [->,bend right] (33)
     (31) edge [bend left] (f)
     (32) edge [bend right] (f)
     (33) edge [bend left] (g)
     (34) edge [bend right] (g)
     (f) edge [->] (41)
     (g) edge [->] (42)
     (41) edge [bend left] (mult)
     (42) edge [bend right] (mult)
     (mult) edge [->] (51);
 \end{tikzpicture} 
\end{center}
where the tensor products are to be read downwards, morphisms compose rightwards, the leftmost circle is the coevaluation map $\bC\to V\otimes V^*$, and the rightmost one is multiplication. Everything in sight is a morphism in $\cM^H$. 

An inner product on $V$ compatible with the comodule structure in the sense of \Cref{def.compat} can be used to equip the endomorphism algebra $\hom_{\cM^H}(V^*\otimes V,A)$ of $V\otimes A$ in $\cM_A^H$ with a $*$-structure. In order to keep proper track of the construction and not confuse the two instances of $V$, fix another object $W\in{^f}\cM^H$ and endow both $V$ and $W$ endowed with compatible inner products. We will construct a conjugate-linear correspondence
\[
 \hom_{\cM^H}(W^*\otimes V,A)\ni f\leftrightarrow f^*\in \hom_{\cM^H}(V^*\otimes W,A)
\]
as follows. 

Consider an $H$-comodule map $f:W^*\otimes V\to A$. As in \Cref{def.compat}, the compatible inner product on $W$ is just an isomorphism $\ol{W}\cong W^*$ as comodules, where $\ol{W}$ is the complex conjugate comodule defined as in \Cref{eq:conj_comod}. Via this isomorphism, regard $f$ as a map $\ol{W}\otimes V\to A$. 

Now, because the $*$ map on $H$ reverses multiplication, taking conjugate linear comodules reverses the tensor product on the category of comodules. By applying this conjugate linear functor, regard $f$ as a map
\[
 \ol{f}:\ol{V}\otimes W\cong\ol{\left(\ol{W}\otimes V\right)}\to \ol{A}. 
\]   
Finally, $f^*$ is by definition the composition
\begin{equation}\label{eq:f^*}
  \begin{tikzpicture}[baseline=(current  bounding  box.center),anchor=base,cross line/.style={preaction={draw=white,-,line width=6pt}}]
    \path (0,0) node (0) {$V^*\otimes W$} +(2.5,0) node (1) {$\ol{V}\otimes W$} +(4.5,0) node (2) {$\ol{A}$} +(6,0) node (3) {$A$,};
    \draw[->] (0) to node[pos=.5,auto] {$\scriptstyle \cong$} (1);
    \draw[->] (1) to node[pos=.5,auto] {$\scriptstyle \ol{f}$} (2);
    \draw[->] (2) to node[pos=.5,auto] {$\scriptstyle *$} (3);
    \draw[->,bend right] (0) to node[pos=.5,auto,swap] {$\scriptstyle f^*$} (3);
  \end{tikzpicture}
\end{equation}
where the first map is the isomorphism induced by the compatible inner product on $V$, and we have used the fact that $*$ preserves $A$ (and hence constitutes an $H$-comodule map $\ol{A}\to A$).  

Everything above works for $A=H$ too, and gives a conjugate linear correspondence 
\begin{equation}\label{eq:long}
 \hom_{\cat{Vec}}(V,W)\cong \hom_{\cM^H}(W^*\otimes V,H)\ni f\leftrightarrow f^*\in \hom_{\cM^H}(V^*\otimes W,H)\cong \hom_{\cat{Vec}}(W,V),
\end{equation}
where the first and last isomorphisms are applications of Sweedler's theorem \cite[4.1.1]{Swe69} identifying the category of right Hopf modules over $H$ with $\cat{Vec}$ (as in the proof of \Cref{le.faith}). I now claim that this latter correspondence is nothing but the usual adjoint for maps between the Hilbert spaces $V$ and $W$. 

{\bf Sketch of proof for the claim.} This is just a matter of unpacking some definitions, and whatever difficulties there are will be mostly notational. 

Start out with a linear map $t:V\to W$. An examination of the proof of \cite[Theorem 4.1.1]{Swe69} shows that the map $f:W^*\otimes V\to H$ corresponding to $t$ through the first isomorphism in \Cref{eq:long} is
\begin{equation}\label{eq:sketch1}
 w^*\otimes v\mapsto \langle~ (w^*)_0, t(v_0)~\rangle~ (w^*)_1v_1=\langle~ w^*,t(v_0)_0~\rangle~ S(t(v_0)_1)v_1,\ w^*\in W^*,\ v\in V,
\end{equation}
where $\langle -,- \rangle$ denotes the usual pairing between a vector space and its dual and the equality uses the definition of the $H$-comodule structure of $W^*$ in terms of that of $W$.

The identification of $W^*$ with $\ol{W}$ via the compatible inner product $\langle -|-\rangle$ on $W$ turns $w^*$ into some element $w\in W$. Using the compatibility of the inner product of $W$, the right hand side of \Cref{eq:sketch1} becomes 
\begin{equation}\label{eq:sketch2}
 \langle~ w~ |~ t(v_0)_0~\rangle~ S(t(v_0)_1)v_1 = \langle~w_0~|~t(v_0)~\rangle~w_1^*v_1. 
\end{equation}

Starting at the other end of \Cref{eq:long} with the map $t^*:W\to V$ (the Hilbert space adjoint of $t:V\to W$) we get, following the same steps as above, the map 
\begin{equation}\label{eq:sketch3}
 V^*\otimes W\ni v^*\otimes w\mapsto \langle~v_0~|~t^*(w_0)~\rangle~v_1^*w_1, 
\end{equation}
where this time $\langle-|-\rangle$ is the compatible inner product on $V$, and $v\in V=\ol{V}$ is the element corresponding to $v^*\in V^*$ upon identifying the two spaces.  

Since for fixed $v\in V$ and $w\in W$ (with corresponding $v^*\in V^*$ and $w^*\in W^*$ obtained using the compatible inner products) the right hand sides of \Cref{eq:sketch2,eq:sketch3} are obtained from one another by applying the $*$ operation of $H$, it follows that indeed \Cref{eq:sketch1,eq:sketch3} correspond to each other through the construction 
\[
 \hom_{\cM^H}(W^*\otimes V,H)\ni f\leftrightarrow f^*\in \hom_{\cM^H}(V^*\otimes W,H)
\] 
defined as in \Cref{eq:f^*} (with $H$ instead of $A$).

{\bf End of proof of part (1).} Moving back to the original case $W=V$, the claim shows that \Cref{eq:f^*} is in fact a $*$-algebra structure on $\End=\End_{\cM_A^H}(V\otimes A)$, and moreover that the canonical embedding 
\[
 \hom_{\cM^H}(V^*\otimes V,A)\le\hom_{\cM^H}(V^*\otimes V,H)
\]
realizes $\End$ as a $*$-subalgebra of the $C^*$-algebra $\hom_{\cM^H}(V^*\otimes V,H)\cong \End(V)$. Consequently, $\End$ is a finite-dimensional $C^*$-algebra in its own right and hence semisimple.   

{\bf (2)} This is an application of the Tannakian formalism reviewed in \Cref{subse.prel_Tannaka}. 

According to \Cref{th.Tannaka_quot} (and in the notation from its statement), the map $H\to\cat{coend}(\omega)$ resulting from the 2-commutative diagram
\begin{equation}\label{eq:thcorr_(1)}
 \begin{tikzpicture}[baseline=(current  bounding  box.center),anchor=base,cross line/.style={preaction={draw=white,-,line width=6pt}}]
    \path (0,0) node (1) {$ {^f}\cM^H$} +(4,0) node (2) {$ {^{ff}\cM_A^H}$} +(2,-1) node (3) {$ \cat{Vec}$}; 
    \draw[->] (1) -- (2) node[pos=.5,auto]{$\scriptstyle V\mapsto V\otimes A$};
    \draw[->] (1) to[bend right=10] node[pos=.5,auto,swap] {$\scriptstyle \text{forget}$} (3);
    \draw[->] (2) to[bend left=10] node[pos=.5,auto] {$\scriptstyle \omega$} (3);   
  \end{tikzpicture}
\end{equation}
can be identified with $H\to C$. Moreover, the proof of part (1) of the present theorem shows how to promote \Cref{eq:thcorr_(1)} to a 2-commutative diagram 
\begin{equation}\label{eq:thcorr_(1)_bis}
 \begin{tikzpicture}[baseline=(current  bounding  box.center),anchor=base,cross line/.style={preaction={draw=white,-,line width=6pt}}]
    \path (0,0) node (1) {$ {^f}\cM^H$} +(4,0) node (2) {$ {^{ff}\cM_A^H}$} +(2,-1) node (3) {$ {^f}\cat{Hilb}$}; 
    \draw[->] (1) -- (2) node[pos=.5,auto]{$\scriptstyle V\mapsto V\otimes A$};
    \draw[->] (1) to[bend right=10] node[pos=.5,auto,swap] {$\scriptstyle \text{forget}$} (3);
    \draw[->] (2) to[bend left=10] node[pos=.5,auto] {$\scriptstyle \omega$} (3);   
  \end{tikzpicture}
\end{equation}
of $*$-categories and $*$-preserving functors: Simply choose a compatible inner product for each simple $H$-comodule. This will then make both ${^f}\cM^H$ and ${^{ff}}\cM_A^H$ into $*$-categories, the former in the obvious way and the latter via \Cref{eq:f^*}.

Now the discussion preceding \Cref{def.*coalg} says that $*$-functors into ${^f}\cat{Hilb}$ give rise to $*$-coalgebra structures on coendomorphism coalgebras. The $*$-coalgebra structure resulting from the left hand diagonal arrow in \Cref{eq:thcorr_(1)} is just $x\mapsto(Sx)^*$, and the result follows from the fact that the functor ${^f}\cat{Hilb}\to{^f}\cat{Vec}$ is an equivalence (so that the two $\omega$'s in \Cref{eq:thcorr_(1),eq:thcorr_(1)_bis} give rise to canonically isomorphic coendomorphism coalgebras).  

{\bf (3)} We know from \Cref{cor.Tannaka_quot} that \Cref{eq:adj} is an equivalence. The rightward arrow in \Cref{eq:adj} sends $A\in \cM_A^H$ onto the one-dimensional comodule $\langle 1\rangle\in \cM^C$ spanned by the multiplicative unit $1\in H$. This in turn means that $\langle 1\rangle\square_CH=A$, i.e. ${^C}H=A$.
\end{proof}

As a consequence, we have the correspondence promised in the title of this section.

\begin{theorem}\label{th.corr2}
The correspondence \Cref{eq:Galois} restricts to a bijection   
   \begin{equation}\label{eq:corr_Galois}
    \begin{tikzpicture}[auto,baseline=(current  bounding  box.center)]
      \node[text width=2.5cm] (1) at (0,0) {right coideal $*$-subalgebras};
     \node[text width=4cm] (2) at (8,0) {quotient left module $*$-coalgebras.};
     \draw[->] (1) .. controls (2,1) and (6,1) .. node{$A\mapsto H_A$} (2);
     \draw[<-] (1) .. controls (2,-1) and (6,-1) .. node[below] {${^C}H\mapsfrom H$} (2);
    \end{tikzpicture}
   \end{equation}
\end{theorem}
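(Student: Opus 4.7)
The plan is to verify that the correspondence \Cref{eq:Galois} restricts to the bijection \Cref{eq:corr_Galois} by combining the general Galois-type equivalences of \Cref{th.tak1,th.tak2} with the $*$-compatibility extracted in \Cref{th.corr}.

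First I would establish the underlying bijection at the level of (not necessarily $*$-closed) coideal subalgebras and quotient module coalgebras in the CQG setting. Any quotient left module coalgebra $C$ of $H$ is automatically cosemisimple, since the image of a simple subcoalgebra of the cosemisimple $H$ is either zero or a simple subcoalgebra of $C$. Hence $H$ is faithfully coflat as a right $C$-comodule, and \Cref{th.tak2} delivers the equivalence \Cref{eq:adj-bis}; in particular $H_{{^C}H}=C$. The reverse composition $A\mapsto H_A\mapsto {^{H_A}}H=A$ for right coideal $*$-subalgebras is \Cref{th.corr}(3), obtained via \Cref{cor.Tannaka_quot} applied to the semisimplicity of ${^{ff}}\cM_A^H$ furnished by \Cref{th.corr}(1).

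Next, the easy direction of $*$-preservation is \Cref{th.corr}(2): if $A$ is a right coideal $*$-subalgebra then $H_A$ is canonically a $*$-coalgebra with $\pi$ intertwining the $*$-structures, so no new argument is required.

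The substance lies in the reverse direction: given a quotient $*$-coalgebra $\pi:H\to C$, show that $A={^C}H$ is $*$-closed. My plan is to proceed indirectly by observing that $A^*=\{a^*:a\in A\}$ is always a right coideal subalgebra (since $\Delta$ is a $*$-algebra map, $\Delta(A^*)\subseteq A^*\otimes H$), and to use the bijection established above to deduce $A=A^*$ from the equality $H(A^+)^*=HA^+$ of left ideals. The $*$-coalgebra hypothesis on $C$ expresses itself as the $\sharp$-invariance of $\ker\pi=HA^+$, where $\sharp=(S\cdot)^*$; since $\sharp$ is an algebra automorphism of $H$, this unfolds to $H\cdot (S(A^+))^*=HA^+$. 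The main obstacle is to bridge the gap between $(S(A^+))^*$ and $(A^+)^*$ after multiplication by $H$; I would attack this using unitarity of finite-dimensional $H$-comodules and the identity $\sharp=S\circ *=*\circ S^{-1}$, together with an analysis of the decomposition of $H$ into matrix coefficients of irreducible comodules, to show that the two left ideals coincide and conclude $A=A^*$.
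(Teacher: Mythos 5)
There are two genuine gaps in your proposal, and they sit exactly where the $*$-hypothesis has to do real work. First, your opening claim that every quotient left module coalgebra of the cosemisimple $H$ is automatically cosemisimple, ``since the image of a simple subcoalgebra of $H$ is either zero or a simple subcoalgebra of $C$,'' is false. The image of a simple subcoalgebra $D\le H$ under a coalgebra surjection $\pi$ is the quotient coalgebra $D/\ker(\pi|_D)$, and quotient coalgebras of a simple coalgebra are dual to unital subalgebras of a matrix algebra, which need not be semisimple: the restriction map $M_2^*\to T_2^*$ dual to the inclusion of the upper-triangular matrices $T_2\subset M_2$ is a surjection of coalgebras whose image, all of $T_2^*$, is neither zero, nor simple, nor even cosemisimple. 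Consequently your ``underlying bijection at the level of not necessarily $*$-closed coideal subalgebras'' is unsupported, and so is the identification $\ker\pi=HA^+$ that you use later (it needs \Cref{th.tak2}, hence faithful coflatness, hence cosemisimplicity of $C$). In the paper, cosemisimplicity of $C$ is precisely where the $*$-structure enters: every finite-dimensional $*$-invariant subcoalgebra $C'\le C$ is a quotient $*$-coalgebra of a finite-dimensional $*$-subcoalgebra $H'\le H$, so $(C')^*$ is a $*$-subalgebra of the finite-dimensional $C^*$-algebra $(H')^*$ and therefore semisimple; no argument of this kind is available without $*$-invariance.

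Second, the step you yourself label ``the main obstacle'' is the actual mathematical content of the reverse direction, and you do not prove it --- you only list tools you would try. Showing that $H\cdot(S(A^+))^*$ and $H\cdot(A^+)^*$ coincide as left ideals is exactly \Cref{le.aux} of the paper, and it resists routine manipulation: since $*\circ S=S^{-1}\circ *$ one has $(S(A^+))^*=S^{-1}((A^+)^*)$, and $S^{-1}$ (like $x\mapsto (Sx)^*$) carries right coideal subalgebras to \emph{left} coideal subalgebras, so the two generating sets are of different chiralities and no standard identity compares the left ideals they generate. The paper's proof of \Cref{le.aux} is genuinely Tannakian: fixing compatible inner products, one builds a conjugate-linear contravariant functor $F$ from ${}^{ff}\cM_A^H$ to ${}^{ff}\cM_{A^*}^H$ compatible with the fiber functors to ${}^f\cat{Hilb}$, and functoriality of coendomorphism coalgebras then shows that $x\mapsto(Sx)^*$ descends to mutually inverse maps $H_A\leftrightarrow H_{A^*}$. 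With these two gaps filled, the remainder of your outline (quoting \Cref{th.corr}(2),(3) for one composite, recovering $C=H_A$ via Takeuchi once cosemisimplicity is known, and deducing $A=A^*$ from equality of the two left ideals) does track the paper's proof; but as written, the proposal assumes away the two points on which the theorem actually turns.
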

\begin{proof}
First let $\iota:A\to H$ be a right coideal $*$-subalgebra, and $C=H_A$ the corresponding quotient coalgebra. We know from part (3) of \Cref{th.corr} that $A={^C}H$ and hence the left-based loop in \Cref{eq:corr_Galois} is the identity.

Now let $\pi:H\to C$ be an arbitrary left module quotient $*$-coalgebra (in the sense that $\pi$ intertwines $H\ni x\mapsto (Sx)^*$ and the $*$-structure of $C$). 

{\bf (a): $C$ is cosemisimple.} Like any coalgebra over any field, $C$ is the union of its finite-dimensional subcoalgebras. Moreover, we can assume these are preserved by the $*$ operation. So let $C'\le C$ be a finite-dimensional $*$-subcoalgebra. It is a quotient of some finite-dimensional $*$-subcoalgebra $H'\le H$. By the Peter-Weyl theorem for compact quantum groups (\cite[Theorem 3.2.2]{KusTus99}), $H$ is the direct sum of its simple subcoalgebras $C_\alpha$.

The coalgebras $C_\alpha$ are indexed by the simple comodules $V_\alpha$ of $H$, and the dual $C_\alpha^*$ can be identified naturally with the endomorphism algebra $\End(V_\alpha)$. The unique (up to positive scaling) compatible inner product on $V_\alpha$ makes $C_\alpha^*$ into a $*$-algebra and hence $C_\alpha$ into a $*$-coalgebra; the $*$-coalgebra structure $x\mapsto (Sx)^*$ on $H$ is obtained by putting together all of these $*$-coalgebra structures on the individual $C_\alpha$. 

Now, $H'$ is a finite direct sum of $C_\alpha$'s, and hence the dual $(C')^*$ is a $*$-subalgebra of the finite-dimensional $C^*$-algebra $(H')^*$. In particular, $(C')^*$ is semisimple, and hence $C'$ is cosemisimple. Since $C'\le C$ was an arbitrary finite-dimensional subcoalgebra, $C$ itself must be cosemisimple. 

{\bf (b): $A={^C}H$ is a $*$-subalgebra of $H$.} Let $A'$ be the set consisting of all $a^*$ for $a\in A$; it is again a right coideal subalgebra of $H$. Let $C'=H_{A'}$ be the corresponding quotient coalgebra of $H$. \Cref{le.aux} below says that there are mutually inverse conjugate-linear comultiplication-reversing maps $C\leftrightarrow C'$ making 
\begin{equation}\label{eq:desc*coalg}
 \begin{tikzpicture}[baseline=(current  bounding  box.center),anchor=base,cross line/.style={preaction={draw=white,-,line width=6pt}}]
    \path (0,0) node (1) {$ H$} +(4,0) node (2) {$ H$} +(0,-2) node (3) {$ C$} +(4,-2) node (4) {$ C'$} +(2,0) node (5) {$\scriptstyle x\mapsto (Sx)^*$}; 
    \draw[->] (1) to[bend right=20] (2);
    \draw[->] (2) to[bend right=20] (1);
    \draw[->] (1) -- (3);
    \draw[->] (2) -- (4);
    \draw[->] (3) to[bend right=10] (4);
    \draw[->] (4) to[bend right=10] (3);   
  \end{tikzpicture}
\end{equation}
commute. 

Because we already know $x\mapsto (Sx)^*$ descends to a $*$-structure on $C$, we must have $C=C'$. But then $A'$ is contained in ${^C}H=A$ and vice versa. This means $A=A'$, which is exactly what we wanted to show. 

{\bf (c): End of the proof.} Since $C$ is cosemisimple, we have $C=H_A$ by \cite[Theorem 2]{Tak79}. This shows that the loop in \Cref{eq:Galois} based at any quotient $*$-coalgebra is the identity. Since by part (b) above this loop factors through a coideal $*$-subalgebra, the right-based loop in \Cref{eq:corr_Galois} makes sense and is the identity.   
\end{proof}

\begin{lemma}\label{le.aux}
Let $H$ be a CQG algebra, $A\le H$ a right coideal subalgebra, and $A'=\{a^*\ |\ a\in A\}$. Let also $C=H_A$ and $C'=H_{A'}$. Then, $x\mapsto(Sx)^*$ descends to mutually inverse maps $C\leftrightarrow C'$. 
\end{lemma}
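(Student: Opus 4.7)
The plan is to verify that the involution $\tau : H \to H$ defined by $\tau(x) := (Sx)^*$ maps $HA^+$ into $HA'^+$ (and, by symmetry, the other way around); both descended maps will then automatically be mutually inverse, since $\tau$ is itself an involution of $H$ in any Hopf $*$-algebra (a consequence of the standard identity $S\circ{*}\circ S\circ{*}=\id_H$). Recall that $\tau$ is conjugate-linear, an algebra homomorphism (being the composite of two anti-homomorphisms), and a coalgebra anti-homomorphism; the conjugate-linearity and comultiplication-reversing properties required of the maps $C \leftrightarrow C'$ will thus be inherited automatically from $\tau$. Note also that because $*$ commutes with $\Delta$, the set $A' = *(A)$ is again a right coideal subalgebra of $H$, so $C' = H/HA'^+$ makes sense, with $A'^+ = *(A^+)$.

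Since $\tau$ is an algebra automorphism of $H$, we have $\tau(HA^+) = H\cdot\tau(A^+)$, so it suffices to prove that $\tau(A^+) \subseteq HA'^+$. Applying $*$ (which reverses multiplication and sends $A'^+$ back to $A^+$), this is in turn equivalent to the Hopf-algebraic inclusion
\begin{equation*}
S(A^+) \;\subseteq\; A^+ H.
\end{equation*}

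The only substantive step is to establish this last inclusion using the coideal condition $\Delta(A) \subseteq A \otimes H$. For any $a \in A$, the element $\Delta(a) - 1\otimes a$ lies in $A \otimes H$ (since $1 \in A$), and its image under $\varepsilon \otimes \id$ vanishes, so it in fact already lies in $A^+ \otimes H$. Applying $\mu \circ (\id\otimes S)$ and using the antipode identity $\mu\circ(\id\otimes S)\circ\Delta = \eta\circ\varepsilon$ then yields
\begin{equation*}
\varepsilon(a)\cdot 1 - S(a) \;\in\; \mu\circ(\id\otimes S)(A^+\otimes H) \;\subseteq\; A^+ H,
\end{equation*}
and for $a \in A^+$ the left-hand side reduces to $-S(a)$, giving the desired inclusion.

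Putting everything together, $\tau$ descends to a conjugate-linear, comultiplication-reversing map $C \to C'$; applying the same argument with the roles of $A$ and $A'$ swapped (legitimate since $A'$ is itself a right coideal subalgebra, and $(A')'=A$) produces a map $C' \to C$ in the opposite direction. The two descended maps compose to the identity in both orders because $\tau^2 = \id_H$ already at the level of $H$. The hardest part of the whole argument is thus the short Hopf-algebra computation $S(A^+) \subseteq A^+ H$; everything else is formal bookkeeping with the Hopf $*$-algebra identities.
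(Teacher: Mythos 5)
Your proof is correct, and it takes a genuinely different route from the paper's. You argue directly at the level of the Hopf $*$-algebra: reduce everything to the single inclusion $S(A^+)\subseteq A^+H$, derive that from the coideal condition via the trick $\Delta(a)-1\otimes a\in A^+\otimes H$ followed by $\mu\circ(\id\otimes S)$, and then let the multiplicativity and involutivity of $\tau=*\circ S$ do the rest of the bookkeeping. The paper instead proves the lemma Tannakianly: it builds a contravariant conjugate-linear functor $F:\ol{\cC}^{\mathrm{op}}\to\cC'$ between the equivariant module categories ${^{ff}}\cM_A^H$ and ${^{ff}}\cM_{A'}^H$ using the compatible inner products, and obtains the map $C\to C'$ from the functoriality of the coendomorphism-coalgebra construction applied to a 2-commutative diagram over ${^f}\cat{Hilb}$. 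Your argument is considerably shorter, entirely elementary, and in fact needs neither cosemisimplicity nor the CQG hypothesis --- it works verbatim for a right coideal subalgebra of an arbitrary Hopf $*$-algebra. What the paper's heavier approach buys is coherence with the machinery already set up: the descended map is exhibited as the coend of an explicit $*$-functor, which makes its compatibility with \Cref{eq:thcorr_(1)_bis} and the rest of the Tannakian identifications in \Cref{th.corr} automatic, whereas in your approach those compatibilities (all of which you do in fact verify or correctly observe to be inherited from $\tau$) have to be checked by hand. Both proofs establish everything the lemma is used for in the proof of \Cref{th.corr2}.
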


We first introduce a piece of notation. In general, for any $\bC$-linear category $\cD$, let $\ol{\cD}$ be the category with the same objects as $\cD$ but conjugate-linear hom spaces. This means that for objects $x,y\in\cD$ $\hom_{\ol{\cD}}(x,y)$ is the same as $\hom_{\cD}(x,y)$ as an underlying abelian group, but the action of $\bC$ on it is twisted by complex conjugation.

\begin{proof of le.aux}
Let $\cC={^{ff}}\cM_A^H$ and $\cC'={^{ff}}\cM_{A'}^H$. Assume we have fixed, once and for all, compatible inner products on all simple $H$-comodules (and hence on all comodules by taking direct sums). 

Imitating the proof of part (1) of \Cref{th.corr}, we define a functor $F:\ol{\cC}^{\mathrm{op}}\to\cC'$ (i.e. a contravariant conjugate-linear functor $\cC\to\cC'$) as follows.
 
For comodules $V,W\in{^f}\cM^H$ identify $\hom_{\cC}(V\otimes A,W\otimes A)$ with $\hom_{\cM^H}(W^*\otimes V,A)$ as before, and similarly with $A'$ and $\cC'$ substituted for $A$ and $\cC$ respectively. The functor $F$ simply sends $V\otimes A$ to $V\otimes A'$. 

Now recall that we have equipped all $H$-comodules with compatible inner products, and are in fact working with unitary comodules. Now let $f:W^*\otimes V\to A$ be an $H$-comodule map (and hence a  morphism in $\cC$). Just as in the proof of \Cref{th.corr}, $f$ can be regarded as a map  
\[
 \ol{f}:\ol{V}\otimes W\cong\ol{\left(\ol{W}\otimes V\right)}\to \ol{A}, 
\]
and $F(f)$ is then defined as 
\begin{equation}\label{eq:F(f)}
  \begin{tikzpicture}[baseline=(current  bounding  box.center),anchor=base,cross line/.style={preaction={draw=white,-,line width=6pt}}]
    \path (0,0) node (0) {$V^*\otimes W$} +(2.5,0) node (1) {$\ol{V}\otimes W$} +(4.5,0) node (2) {$\ol{A}$} +(6,0) node (3) {$A'$.};
    \draw[->] (0) to node[pos=.5,auto] {$\scriptstyle \cong$} (1);
    \draw[->] (1) to node[pos=.5,auto] {$\scriptstyle \ol{f}$} (2);
    \draw[->] (2) to node[pos=.5,auto] {$\scriptstyle *$} (3);
    \draw[->,bend right] (0) to node[pos=.5,auto,swap] {$\scriptstyle F(f)$} (3);
  \end{tikzpicture}
\end{equation}
Now, denoting once more by $\omega:\cC\to{^f}\cat{Hilb}$ the functor from the proof of part (2) of \Cref{th.corr} and by $\omega':\cC'\to{^f}\cat{Hilb}$ its analogue, we have a 2-commutative diagram 
\begin{equation}\label{eq:CC'}
 \begin{tikzpicture}[baseline=(current  bounding  box.center),anchor=base,cross line/.style={preaction={draw=white,-,line width=6pt}}]
   \path (0,0) node (1) {$ \ol{\cC}^{\mathrm{op}}$} +(3,0) node (2) {$ \cC'$} +(.2,-1.3) node (3) {$\scriptstyle
     \ol{{^f}\cat{Hilb}}^{\mathrm{op}}$} +(1.5,-2) node (4) {$\scriptstyle {^f}\cat{Hilb}$}; 
    \draw[->] (1) -- (2) node[pos=.5,auto]{$\scriptstyle F$};
    \draw[->] (1) to[bend right=15] node[pos=.3,auto,swap] {$\scriptstyle \ol{\omega}^{\mathrm{op}}$} (3);
    \draw[->] (3) to[bend right=15] node[pos=.5,auto,swap] {$\scriptstyle *$} (4);
    \draw[->] (2) to[bend left=30] node[pos=.5,auto] {$\scriptstyle \omega'$} (4);
  \end{tikzpicture}
\end{equation}
Here, $\ol{\omega}^{\mathrm{op}}$ is just $\omega$, regarded as a functor between the opposite, conjugate-linear categories and the lower horizontal $*$ is the $*$-structure on $\cat{Hilb}$.

It is not hard to check that the coendomorphism coalgebra of the left hand downward composition $*\circ\ol{\omega}^{\mathrm{op}}$arrow in \Cref{eq:CC'} is $\ol{C}^{\mathrm{cop}}$, i.e. $C$ as an abelian group, with conjugate linear structure as a complex vector space, and with reversed comultiplication as compared to $C$. The diagram \Cref{eq:CC'}, in other words, induces a conjugate-linear, comultiplication-reversing map $C\to C'$. It is compatible with the surjections $H\to C$ and $H\to C'$ because \Cref{eq:CC'} can be embedded in the 2-commutative diagram
\begin{equation}\label{eq:CC'_bis}
 \begin{tikzpicture}[baseline=(current  bounding  box.center),anchor=base,cross line/.style={preaction={draw=white,-,line width=6pt}}]
    \path (0,0) node (1) {$ \ol{\cC}^{\mathrm{op}}$} +(3,0) node (2) {$ \cC'$} +(.2,-1.3) node (3) {$\scriptstyle \ol{{^f}\cat{Hilb}}^{\mathrm{op}}$} +(1.5,-2) node (4) {$\scriptstyle {^f}\cat{Hilb}$} +(-2,.5) node (a) {$ \ol{{_u^f}\cM^H}^{\mathrm{op}}$} +(5,.5) node (b) {$ {_u^f}\cM^H$}; 
    \draw[->] (1) -- (2) node[pos=.5,auto,swap]{$\scriptstyle F$};
    \draw[->] (1) to[bend right=15] node[pos=.3,auto,swap] {$\scriptstyle \ol{\omega}^{\mathrm{op}}$} (3);
    \draw[->] (3) to[bend right=15] node[pos=.5,auto,swap] {$\scriptstyle *$} (4);
    \draw[->] (2) to[bend left=30] node[pos=.5,auto,swap] {$\scriptstyle \omega'$} (4);
    \draw[->] (a) to node[pos=.5,auto] {$\scriptstyle *$} (b);
    \draw[->] (a) to[bend right=20] node[pos=.5,auto,swap] {$\scriptstyle \ol{\text{forget}}^{\mathrm{op}}$} (3);
    \draw[->] (b) to[bend left=20] node[pos=.5,auto] {$\scriptstyle \text{forget}$} (4);
    \draw[->] (a) to (1);
    \draw[->] (b) to (2); 
  \end{tikzpicture}
\end{equation}
Finally, the roles of $C$ and $C'$ can be reversed, and hence $x\mapsto (Sx)^*$ descends to a map $C'\to C$ as well.
\end{proof of le.aux}

To summarize payload of the present section, we have

\begin{corollary}\label{cor.cncl}
  Let $H$ be a CQG algebra and $\iota:A\to H$ a right coideal $*$-subalgebra. Then:
  \begin{enumerate}
    \renewcommand{\labelenumi}{(\arabic{enumi})}
  \item $\iota$ splits as a right $A$-module, right $H$-comodule map.
  \item $C=H_A$ is a cosemisimple left module quotient $*$-coalgebra.
  \item $A={}^CH$.
  \item \Cref{eq:adj,eq:adj-bis} are equivalences.
  \item $H$ is faithfully flat as a right $A$-module.     
  \end{enumerate}
\end{corollary}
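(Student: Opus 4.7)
The plan is to assemble results already established in this section: each part of \Cref{cor.cncl} will be extracted from \Cref{th.corr}, \Cref{cor.Tannaka_quot}, \Cref{cor.Tannaka_quot_expect}, and \Cref{th.tak2}. The key engine is \Cref{th.corr}(1), whose proof shows that under the standing assumptions ($H$ a CQG algebra and $\iota$ a right coideal $*$-subalgebra) the category $\cC={^{ff}}\cM_A^H$ is semisimple. Once that is recorded, the remaining assertions follow with very little extra work.

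Concretely, I would dispatch the items as follows. For (1), the semisimplicity of $\cC$ is precisely the hypothesis of \Cref{cor.Tannaka_quot_expect}, which supplies the right $A$-module and right $H$-comodule splitting of $\iota$. For (2), the cosemisimplicity of $C=H_A$ is part of the conclusion of \Cref{cor.Tannaka_quot} (again enabled by the semisimplicity of $\cC$), while the $*$-coalgebra structure on $C$ and its compatibility with the surjection $\pi:H\to C$ are exactly the content of \Cref{th.corr}(2). Part (3) is \Cref{th.corr}(3) verbatim, and part (4) is the remaining assertion of \Cref{cor.Tannaka_quot}.

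The only item not handled directly by the results on the immediate horizon is (5), for which I would invoke \Cref{th.tak2}. Since by (2) the coalgebra $C$ is cosemisimple, every $C$-comodule is coflat; in particular $H$ is faithfully coflat as a right $C$-comodule, and the ``whenever'' clause of \Cref{th.tak2} then delivers faithful flatness of $H$ as a right $A$-module. The real substance of all of this was already absorbed in the proof of \Cref{th.corr}(1): constructing a $*$-algebra structure on $\End_{\cM_A^H}(V\otimes A)$ via compatible inner products and realizing it as a $*$-subalgebra of the finite-dimensional $C^*$-algebra $\End(V)$ was the step requiring the genuine analytic input that distinguishes the CQG setting from the generic Hopf-algebraic one. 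Given that foundation, \Cref{cor.cncl} is essentially a packaging exercise, with no further obstacle.
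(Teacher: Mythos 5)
Your proposal is correct and follows essentially the same route as the paper: items (1)--(4) are read off from \Cref{th.corr}, \Cref{cor.Tannaka_quot}, and \Cref{cor.Tannaka_quot_expect} exactly as in the paper's proof, and item (5) is obtained from \Cref{th.tak2}. The only cosmetic difference is that the paper deduces (5) from the already-established equivalence of \Cref{eq:adj-bis} via the ``if and only if'' clause of \Cref{th.tak2}, whereas you re-derive that equivalence from the cosemisimplicity of $C$ (faithful coflatness) before applying the same theorem --- the same chain of reasoning, unpacked one step further.
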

\begin{proof}
  Part (1) of \Cref{th.corr} says that under the present hypotheses \Cref{cor.Tannaka_quot} applies, and the latter implies claims (2) and (4). Claim (3) is part of \Cref{th.corr}, and (1) follows from \Cref{cor.Tannaka_quot,cor.Tannaka_quot_expect}.

  Finally, claim (5) follows from the fact that \Cref{eq:adj-bis} is an equivalence together with \Cref{th.tak2}.
\end{proof}

Note that part (5) of \Cref{cor.cncl} is, in principle, chiral (i.e. left-right asymmetric): $A$ is a {\it right} coideal subalgebra, and we are asserting {\it right} faithful flatness. Nevertheless, we can symmetrize the flatness result.

\begin{corollary}\label{cor.left-fl}
  Under the hypotheses of \Cref{cor.cncl} $H$ is also left faithfully flat over $A$
\end{corollary}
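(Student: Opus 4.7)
The strategy is to derive left faithful flatness from the right-sided result (part~(5) of \Cref{cor.cncl}) by using the $*$-structure on $H$, which restricts to $A$ by virtue of $A$ being a right coideal $*$-subalgebra, to interconvert left and right module categories.

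First, I would define a conjugate-linear functor $\overline{(-)}\colon\cM_A\to{}_A\cM$ sending a right $A$-module $M$ to the complex conjugate vector space $\overline{M}$ equipped with left $A$-action $a\cdot\bar{m}:=\overline{m\cdot a^*}$. The identity $(ab)^*=b^*a^*$ makes this a well-defined left action, and the analogous construction in the opposite direction furnishes an inverse, so $\overline{(-)}$ is an equivalence of categories (conjugate-linear on morphisms). Applied to $H$ viewed as a right $A$-module by right multiplication, the map $\bar{h}\mapsto h^*$ identifies $\overline{H}$ with $H$ viewed as a left $A$-module in the standard way.

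Next, I would exhibit a natural conjugate-linear isomorphism
\begin{equation*}
  \phi_N\colon N\otimes_A H\to H\otimes_A\overline{N},\qquad n\otimes h\mapsto h^*\otimes\bar{n},
\end{equation*}
for $N\in\cM_A$, whose well-definedness over $A$ follows from $(ah)^*=h^*a^*$ together with the definition of the left action on $\overline{N}$. Combining this with the equivalence $\overline{(-)}$ of the previous step shows that the functor $-\otimes_A H\colon\cM_A\to\cat{Vec}$ is conjugate-linearly naturally isomorphic to the composite
\begin{equation*}
  \cM_A\xrightarrow{\overline{(-)}}{}_A\cM\xrightarrow{H\otimes_A-}\cat{Vec}.
\end{equation*}
The second functor here is exact and faithful by the right-sided faithful flatness already established, and since exactness and faithfulness are insensitive to conjugate linearity, so is the composite, and therefore so is $-\otimes_A H$; this yields the claim.

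This argument is almost entirely formal once the use of $*$ to swap sides is pinpointed; the only step requiring care is the well-definedness of $\phi_N$, which reduces immediately to comparing the $A$-tensor relations on the two sides.
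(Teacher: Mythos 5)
Your argument is correct and is essentially the paper's own: the paper isolates exactly this observation as \Cref{le.lr-flat}, using the complex-conjugation equivalence between left and right $A$-module categories together with the $*$-isomorphism $h\mapsto h^*$ identifying $H$ as a left $A$-module with the conjugate of $H$ as a right $A$-module. Your version merely makes the intertwining isomorphism $\phi_N$ explicit.
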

\begin{proof}
  Given the right faithful flatness in part (5) of \Cref{cor.cncl}, the conclusion follows from the simple general observation in \Cref{le.lr-flat} below.
\end{proof}

\begin{lemma}\label{le.lr-flat}
  For a morphism $\iota:A\to H$ of $*$-algebras left and right flatness are equivalent; the same goes for faithful flatness. 
\end{lemma}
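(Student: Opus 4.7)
The $*$-structure on $A$ furnishes a conjugate-linear equivalence of abelian categories
\[
\overline{(-)}\;\colon\;\mathcal{M}_A\;\longrightarrow\;{}_A\mathcal{M},\qquad M\mapsto \overline{M},
\]
where $\overline{M}$ is the complex-conjugate vector space equipped with left $A$-action $a\cdot\overline{m}:=\overline{m\cdot a^{*}}$; an analogous formula gives the inverse in the other direction. Because $\iota$ is a $*$-map, the right $A$-module $H$ (restricted along $\iota$) conjugates to a left $A$-module canonically isomorphic to $H$ viewed as a left $A$-module via $\iota$, the isomorphism being $h\mapsto\overline{h^{*}}$. Equivalences of abelian categories are exact and faithful, and also preserve and reflect both exactness and faithfulness of functors into a third category, so throughout the argument I can shuttle modules back and forth without worrying about these properties being lost.

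The single computation that does any work is the natural isomorphism of complex vector spaces
\[
\overline{H\otimes_A M}\;\cong\;\overline{M}\otimes_A H,\qquad \overline{h\otimes m}\longmapsto \overline{m}\otimes h^{*},
\]
valid for any left $A$-module $M$, where on the left $H$ is viewed as a right $A$-module and on the right as a left $A$-module (both via $\iota$). Well-definedness amounts to checking that $\overline{h\iota(a)\otimes m}$ and $\overline{h\otimes am}$ land in the same element, which follows from $\iota(a)^{*}=\iota(a^{*})$ together with the defining tensor relation $\overline{m}\otimes \iota(a^{*})h^{*}=\overline{m}\cdot a^{*}\otimes h^{*}=\overline{am}\otimes h^{*}$ on the right.

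Putting the pieces together: right flatness of $H$ over $A$ means exactness of $H\otimes_A(-)$ on ${}_A\mathcal{M}$; applying the exact conjugation on $\mathrm{Vec}$ this is equivalent to exactness of $\overline{(-)}\otimes_A H$ on ${}_A\mathcal{M}$ (via the displayed natural isomorphism), and precomposing with the equivalence $\mathcal{M}_A\simeq {}_A\mathcal{M}$ this is equivalent to exactness of $(-)\otimes_A H$ on $\mathcal{M}_A$, i.e.\ left flatness of $H$ over $A$. The same chain of equivalences respects faithfulness of these functors, so the statement about faithful flatness follows verbatim. The only real obstacle is the bookkeeping for the side-swap in the displayed isomorphism, and this is a short computation using nothing but the $*$-axioms and the $*$-compatibility of $\iota$.
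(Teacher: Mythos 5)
Your proposal is correct and follows essentially the same route as the paper: a conjugate-linear equivalence between left and right $A$-modules induced by the $*$-structure, the observation that it intertwines the two tensor functors (your displayed isomorphism $\overline{H\otimes_AM}\cong\overline{M}\otimes_AH$ is exactly this intertwining, with the identification $h\mapsto h^*$ of $H$ with its conjugate built in), and the fact that equivalences preserve and reflect exactness and faithfulness. No gaps; you have merely written out the well-definedness check that the paper leaves implicit.
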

\begin{proof}
  Consider the autoequivalence of $\cat{Vec}$ sending a vector space $V$ to its complex conjugate $\overline{V}$. It extends to an equivalence
  \begin{equation*}
    F:{}_A\cM\to \cM_A
  \end{equation*}
  between the categories of left and right $A$-modules: given a left $A$-module $M$, its complex conjugate $\overline{M}$ can be equipped with the right $A$-module structure $\triangleleft$ defined by
  \begin{equation*}
    m\triangleleft a:= a^*m,\ \forall m\in M,\ \forall a\in A. 
  \end{equation*}
  $F$ intertwines the endofunctors $H\otimes_A-$ and $-\otimes_AF(H)$, so that one of these functors is (faithfully) exact if and only if the other is. Finally, it remains to observe that the $*$-structure $x\mapsto x^*$ on $H$ implements an isomorphism between $H$ viewed as a left $A$-module and $F(H)$ (the latter with $H$ regarded as a right $A$-module).
\end{proof}

\section{Expectations and Fourier transforms}\label{se.exp}

According to part (1) of \Cref{cor.cncl} we have an ``expectation'' $E:H\to A$ splitting the inclusion $\iota:A\to H$ both as morphism in $\cM_A^H$ (i.e. as a map of right $A$-modules and right $H$-comodules). As discussed in the introduction, we are interested in characterizing those $\iota:A\to H$ for which $E$ is positive as described above. Recall our use of the term:

\begin{definition}\label{def.pos}
Let $H$ be a $*$-algebra admitting a $C^*$ completion, $\iota:A\to H$ a $*$-subalgebra, and $E:H\to A$ a linear map. We say that $E$ is {\it completely positive} (or occasionally just {\it positive}) if \Cref{eq:pos} holds for all $x_i\in H$ and $a_i\in A$ in the completion $A_u$ of $A$ with respect to the universal $C^*$ norm on $H$.  
\end{definition}

The main result of this section is

\begin{theorem}\label{th.s2}
  Let $\iota:A\to H$ be a right coideal $*$-subalgebra of a CQG algebra, and $E:H\to A$ the right $A$-module, right $H$-comodule splitting of $\iota$ from \Cref{cor.cncl}.

  $E$ is positive in the sense of \Cref{def.pos} if and only if $A$ is invariant under the square of the antipode $S=S_H$. 
\end{theorem}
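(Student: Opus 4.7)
The strategy is to bridge the algebraic description $E(x)=h_C(\pi(x_1))x_2$ of \Cref{re.e} with the operator-algebraic modular theory of the Haar state $h$ on $H$. On a CQG algebra $h$ is faithful and KMS, and the Woronowicz characters $\{f_z\}_{z\in\bC}$ simultaneously implement $S^2=f_1*\mathrm{id}*f_{-1}$ and the modular automorphisms $\sigma^h_z$, both by two-sided convolution. Because $H$ decomposes as a direct sum of finite-dimensional matrix coefficient spaces on which $z\mapsto\sigma^h_z$ is analytic and determined by any single nonzero value, a $*$-subspace $A\le H$ is $S^2$-invariant if and only if it is stable under the entire modular flow restricted to $H$. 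A short computation using the bi-invariance $h(x_2)x_1=h(x)1$ together with $h_C(\pi(1))=1$ then yields $h\circ E=h$.

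With these preliminaries the theorem reduces, modulo routine passage between the algebraic, $C^*$, and $W^*$ settings, to the two sides of Takesaki's theorem on state-preserving conditional expectations. For the implication positivity $\Rightarrow$ $S^2$-invariance, a completely positive $E$ extends to an $h$-preserving CP idempotent $\widetilde{E}:H_u\to A_u$; applying Takesaki's theorem to the $W^*$-closures in the GNS representation of $h$ forces $A_u$ to be globally invariant under $\sigma^h$, and the reformulation above then gives $S^2$-invariance of $A$. Conversely, if $A$ is $S^2$-invariant then $A_u$ is modular-stable, and Takesaki produces an $h$-preserving CP conditional expectation $\widetilde{E}:H_u\to A_u$. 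The nontrivial step is then to identify $\widetilde{E}|_H$ with $E$, using the uniqueness of $E$ as a right $A$-module, right $H$-comodule splitting of $\iota$, which is traceable to the essential uniqueness of the trivial-isotype projection $h_C:C\to\bC$ on the cosemisimple $*$-coalgebra $C$ (\Cref{cor.Tannaka_quot_expect}).

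The Fourier transform recalled in \Cref{def.four}, which sends elements of $H$ to functionals on $C$, is the tool that performs this last identification: it converts the conditions defining $E$ uniquely (right $A$-linearity and right $H$-colinearity) into positivity and covariance statements about functionals on the $*$-coalgebra $C$, where one can directly check that the abstractly produced Takesaki expectation respects the dual structures and therefore coincides with $E$. The main technical obstacle I anticipate is precisely this matching, that is, ensuring that the $h$-preserving CP idempotent onto $A_u$ furnished by Takesaki is indeed the Tannakian retraction of \Cref{cor.Tannaka_quot_expect} rather than merely some other such idempotent; once that identification is secured, both directions of the equivalence stated in the theorem fall out.
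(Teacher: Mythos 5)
Your treatment of the implication ``$E$ positive $\Rightarrow$ $S^2(A)=A$'' is essentially the paper's: extend $E$ to a state-preserving expectation on the GNS von Neumann closures, invoke Takesaki's theorem to get invariance of $A$ under the modular flow $\sigma_t=f_{ti}*\cdot*f_{ti}$, and translate back using Woronowicz's characters. (The paper then needs an analytic-continuation step -- the map $z\mapsto f_z*x*f_z$ modulo $A$ is entire and vanishes on the imaginary axis, hence everywhere -- together with the right coideal property $f_w*A\subseteq A$ to decouple the two-sided convolutions and reach $S^2=f_{-1}*\cdot*f_1$; your write-up compresses this into the slogan that the flow is ``determined by any single nonzero value,'' which is where the actual work lives.)

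The forward implication is where your route genuinely diverges, and where the gaps are. First, the asserted equivalence ``a $*$-subspace $A\le H$ is $S^2$-invariant iff it is stable under the modular flow'' is false at that level of generality and delicate even for coideals: $\sigma_t$ and $S^2$ are \emph{different} two-sided convolution groups ($f_{ti}*\cdot*f_{ti}$ versus $f_{-1}*\cdot*f_1$), and even after using the coideal property to reduce to right convolution, knowing that $R_{\pm 1}=\cdot*f_{\pm1}$ preserves a subspace does not by analyticity alone force $R_z$ to preserve it for all $z$ (an entire one-parameter group $\exp(zT)$ can preserve a subspace at $z=1$ without preserving it generically). One would need the positivity/diagonalizability of the Woronowicz character on each matrix block to rescue this, and you do not supply that argument. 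Second, even granting modular stability of the von Neumann closure $N$ of $A$, the converse of Takesaki's theorem hands you \emph{some} $h$-preserving expectation $\widetilde E:M\to N$; you correctly flag the identification $\widetilde E|_H=E$ as the main obstacle, but the proposed tool is miscast. In the paper the Fourier transform is not used to match two expectations: it is used to prove positivity of the scalar functional $\varphi=h_C\circ\pi$ directly (after \Cref{le.phi} reduces positivity of $E$ to positivity of $\varphi$), via the identity $\varphi\bigl((S^2y)^*S^2x\bigr)=\theta\bigl((\cF y)^*\cF x\bigr)$ for an explicitly positive functional $\theta$ on $C^\bullet$, following \cite[\S 3]{ff}. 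This bypasses Takesaki entirely in this direction. Your matching step would instead require showing that the algebraic $E$ satisfies the characterizing identity $h(E(x)a)=h(xa)$ for $a\in A$ and that $N\cap H=A$, neither of which is addressed; as written, the forward implication is not established.
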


Before embarking on the proof we need some preparation. For at least part of the argument we will follow the strategy employed in \cite[$\S$3]{ff}, and hence will refer to that earlier material quite extensively. 

Recall from the introduction that when, occasionally, we want to complete $H$ to a $C^*$-algebra we do so with respect to the maximal or universal norm, and
denote the completion by $H_u$.

As a first step towards proving \Cref{th.s2}, we reduce the positivity of $E$ to that of a functional $\varphi:H\to \bC$ (recall that such a functional is {\it positive} if $\varphi(x^*x)\ge 0$ for all $x\in H$). Denote
\begin{equation*}
 \begin{tikzpicture}[baseline=(current  bounding  box.center),anchor=base,cross line/.style={preaction={draw=white,-,line width=6pt}}]
    \path (0,0) node (1) {$H$} +(2,.5) node (2) {$C$} +(4,0) node (3) {$\bC,$}; 
    \draw[->] (1) to[bend left=6] node[pos=.5,auto] {$\scriptstyle \pi$} (2);
    \draw[->] (2) to[bend left=6] node[pos=.5,auto] {$\scriptstyle h_C$} (3);
    \draw[->] (1) to[bend right=10] node[pos=.5,auto,swap] {$\scriptstyle \varphi$} (3);
  \end{tikzpicture}
\end{equation*}
where $h_C$ is the unique unital integral on $C$, in the sense that $h_C(\pi(1))=1$; the existence of $h_C$ follows from the cosemisimplicity of $C$ (\Cref{cor.cncl} (2)).

The following observation simply recasts \cite[Lemma 3.4]{ff}, which only differs in that $\iota:A\to H$ there was assumed to be a Hopf subalgebra. The proof applies verbatim to the somewhat more general setting here.

\begin{lemma}\label{le.phi}
  Under the hypotheses of \Cref{th.s2}, $E:H\to A$ is positive in the sense of \Cref{def.pos} if and only if $\varphi:H\to \bC$ defined above is positive.
  \qedhere
\end{lemma}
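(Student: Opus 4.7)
The equivalence splits naturally into a cheap direction and a computational one, and my plan is to handle each separately.

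For the implication ($E$ positive $\Rightarrow$ $\varphi$ positive), I would start by checking the pointwise identity $\varphi=\varepsilon\circ E$. Using the explicit description $E(x)=\varphi(x_1)x_2$ from \Cref{re.e}, counitality of $\Delta$ gives
\begin{equation*}
  \varepsilon(E(x))=\varphi(x_1)\varepsilon(x_2)=\varphi(x_1\varepsilon(x_2))=\varphi(x).
\end{equation*}
Since $\varepsilon$ is a $*$-algebra character on the Hopf $*$-algebra $H$, its restriction to $A$ is a $*$-character and therefore extends uniquely (and continuously) to a state on $A_u$. Specializing the positivity of $E$ to $n=1$, $a_1=1$ gives $E(x^*x)\ge 0$ in $A_u$, and applying the extended $\varepsilon$ yields $\varphi(x^*x)=\varepsilon(E(x^*x))\ge 0$, as required.

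For the converse, my plan is to expose $\sum_{i,j}a_i^*E(x_i^*x_j)a_j$ as a sum of squares inside the ambient $C^*$-algebra $H_u$. Expanding via $E(y)=\varphi(y_1)y_2$ and fixing concrete Sweedler decompositions $\Delta(x_i)=\sum_\alpha y_i^\alpha\otimes z_i^\alpha$, the double sum re-indexes as
\begin{equation*}
  \sum_{p,q}\varphi(v_p^*v_q)\,w_p^*w_q,
\end{equation*}
where $p=(i,\alpha)$ runs over a finite set, $v_p=y_i^\alpha\in H$, and $w_p=z_i^\alpha a_i\in H$. Positivity of $\varphi$ forces the finite scalar matrix $M_{pq}=\varphi(v_p^*v_q)$ to be Gram-positive: for any scalars $(c_p)$,
\begin{equation*}
  \sum_{p,q}\overline{c_p}c_qM_{pq}=\varphi\bigl((\textstyle\sum_p c_pv_p)^*(\textstyle\sum_q c_qv_q)\bigr)\ge 0.
\end{equation*}
Taking any factorization $M=L^*L$ (Cholesky or spectral) and substituting back collapses the sum to $\sum_k\bigl(\sum_p L_{kp}w_p\bigr)^*\bigl(\sum_q L_{kq}w_q\bigr)$, manifestly a finite sum of squares in $H$ and thus positive in $H_u$. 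Because the expression lies a priori in $A$ (the image of $E$), and $A_u$ is a $C^*$-subalgebra of $H_u$ so that positivity in the two algebras coincide for elements of $A_u$, the conclusion transfers to $A_u$.

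The only real obstacle I anticipate is the bookkeeping of two layers of summation, the outer labels $i,j$ and the Sweedler labels $\alpha,\beta$, handled cleanly: once the Gram-positivity of $M$ is recognized, the rest is the standard linear-algebraic observation that $\sum M_{pq}w_p^*w_q$ is positive whenever $M$ is a positive scalar matrix, no matter what the $w_p$ happen to be.
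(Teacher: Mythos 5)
Your proof is correct. The paper does not write out an argument of its own for \Cref{le.phi} but defers to \cite[Lemma 3.4]{ff}, and the argument there is essentially the one you give: the forward direction by composing with the counit (a $*$-character, hence a state on the completion), and the converse by expanding $E(x_i^*x_j)=\varphi(x_{i(1)}^*x_{j(1)})\,x_{i(2)}^*x_{j(2)}$ and using positive-semidefiniteness of the Gram matrix $\bigl(\varphi(v_p^*v_q)\bigr)$ to exhibit the sum as a sum of squares in $H_u$, then transferring positivity to the $C^*$-subalgebra $A_u$.
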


As in \cite[$\S$3]{ff}, we write
\begin{equation*}
  C=\bigoplus_\alpha C_\alpha
\end{equation*}
for the decomposition of $C$ as a direct sum of matrix coalgebras $C_\alpha\cong M_{n_\alpha}^*$ (dual of the $n_\alpha\times n_\alpha$ matrix algebra $M_{n_\alpha}$). We also denote by $C^\bullet$ the (non-unital) $*$-algebra
\begin{equation*}
  C^\bullet=\bigoplus_\alpha C_\alpha^*. 
\end{equation*}
Recall \cite[Definition 3.10]{ff}

\begin{definition}\label{def.four}
  The {\it $\varphi$-relative Fourier transform} $\cF:H\to C^{\bullet}$ is the map defined by
  \begin{equation*}
    H\ni x\mapsto \varphi(Sx\cdot). 
  \end{equation*}
\end{definition}

The name is meant to recall the analogy to Fourier transforms on locally compact abelian groups (e.g. \cite[Chapter 1]{rud}), turning functions on $G$ into functions on its Pontryagin dual $\widehat{G}$. As mentioned in the discussion following \cite[Definition 3.10]{ff}, the notion is inspired by the non-commutative Fourier transform used extensively in \cite{pw}.

We prove \Cref{th.s2} in stages. Consider first one of the implications. 

\begin{proposition}\label{pr.s2>pos}
Under the hypotheses of \Cref{th.s2}, suppose $S^2(A)=A$. Then, $E$ is positive. 
\end{proposition}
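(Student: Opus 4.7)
By \Cref{le.phi}, positivity of $E$ is equivalent to positivity of the single linear functional $\varphi = h_C\circ\pi: H\to\bC$, i.e.\ $\varphi(x^*x)\ge 0$ for every $x \in H$. Three preliminary properties will organize the argument. First, $E|_A = \id_A$ together with $E = (\varphi\otimes\id)\circ\Delta$ gives $\varphi|_A = \varepsilon|_A$. Second, from $\pi(a) = \varepsilon(a)\pi(1)$ for $a\in A$ and $x\cdot\pi(1) = \pi(x)$ one obtains the right $A$-invariance $\varphi(xa) = \varepsilon(a)\varphi(x)$. Third---and this is the first essential use of $S^2(A) = A$---the automorphism $S^2$ preserves $HA^+$ and therefore descends to an algebra automorphism $S^2_C$ of $C$; since $S^2_C$ permutes the simple subcoalgebras $C_\alpha$ while fixing $\pi(1)$ pointwise, uniqueness of $h_C$ as the retraction onto the trivial summand $\langle \pi(1)\rangle$ forces $h_C\circ S^2_C = h_C$, i.e.\ $\varphi\circ S^2 = \varphi$.

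Adapting the strategy of \cite[$\S$3]{ff}, the argument now proceeds via the relative Fourier transform $\cF: H \to C^\bullet$ of \Cref{def.four}. Since $C$ is cosemisimple by \Cref{cor.cncl}(2), $C^\bullet = \bigoplus_\alpha C_\alpha^*$ is a genuine $C^*$-algebra (a direct sum of finite-dimensional matrix algebras), with $*$-structure dual to the $*$-coalgebra structure on $C$ from \Cref{th.corr}; in particular, positivity there is unambiguous. Routine checks give that $\cF$ factors through $\pi$ (since $\varphi$ annihilates the left ideal $HA^+$) and that $\cF(1) = h_C$ inside $C^\bullet$. The crux is to show that $\cF$ sends elements of the form $x^*x$ to positive elements of $C^\bullet$; the cleanest route is to upgrade $\cF$ to a $*$-algebra homomorphism, the multiplicativity being a formal coassociativity computation that requires no hypothesis, and the $*$-compatibility $\cF(x^*) = \cF(x)^*$---once unwound via $(Sy)^* = S^{-1}(y^*)$ and $\pi(y)^* = \pi((Sy)^*)$---reducing precisely to the $S^2$-invariance of $\varphi$ established in the previous paragraph.

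Once $\cF(x^*x) = \cF(x)^*\cF(x)$ is known to be positive in $C^\bullet$, pairing with the positive element $\pi(1)\in C$ yields $\varphi(S(x^*x))\ge 0$. A final manipulation, using $\varphi\circ S^2 = \varphi$, the identity $(S^{-1}y)^* = S(y^*)$, and the KMS-type cyclicity that $\varphi$ inherits from the Haar state $h_H$ of $H$ (through the relation $h_H = h_H\circ E$), converts this into the desired $\varphi(x^*x)\ge 0$. The principal technical obstacle is the $*$-compatibility step for $\cF$: in the Hopf subalgebra setting of \cite[$\S$3]{ff} the identity $S(A) = A$ rendered this step essentially automatic, whereas here one has only $S^2(A) = A$ at one's disposal, and the entire weight of the argument is carried by the descent of $S^2$ to $C$ and the induced $S^2$-invariance of $\varphi$.
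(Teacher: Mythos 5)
There is a genuine gap at the crux of your argument: the relative Fourier transform $\cF:H\to C^\bullet$ of \Cref{def.four} is \emph{not} an algebra homomorphism from $(H,\cdot)$ to $(C^\bullet,\text{convolution})$, and no coassociativity computation will make it one. As in the classical setting, the Fourier transform intertwines a \emph{convolution}-type product on the source with the multiplication on the target; it does not respect the given multiplication of $H$. A minimal counterexample: take $H=\cO(\bZ/2)$ and $A=\bC$, so that $C=H$, $\varphi=h$ and $C^\bullet\cong H^*$. With $x=x'=\delta_s$ (the indicator function of the nontrivial element $s$) one computes $\cF(xx')(\delta_e)=0$ while $\bigl(\cF(x)\cF(x')\bigr)(\delta_e)=\tfrac14$. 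Since the identity $\cF(x^*x)=\cF(x)^*\cF(x)$ is the load-bearing step of your plan, the argument does not go through as written. (Your preliminary observations --- that $S^2$ descends to a coalgebra automorphism of $C$ and that $\varphi\circ S^2=\varphi$ --- are correct, but they cannot repair a multiplicativity statement that already fails for $A=\bC$.)

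The paper's proof replaces multiplicativity by a Plancherel-type identity: following \cite{ff} (Proposition 3.8 and Theorem 3.1 there), one uses the descended action of $S^2$ on $C$ to build an auxiliary \emph{positive} functional $\theta=\sum_i\langle e^i,S^2e_{i(1)}\rangle e_{i(2)}$ on $C^\bullet$ and proves $\varphi\bigl((S^2y)^*S^2x\bigr)=\theta\bigl((\cF y)^*\cF x\bigr)$ for all $x,y\in H$; positivity of $\varphi$ then follows from positivity of $\theta$ together with the bijectivity of $S^2$. This also removes the need for your final ``KMS-type cyclicity'' step, which as stated is circular: $\varphi$ is not the Haar state, and establishing any KMS- or trace-like behaviour for $\varphi$ is essentially equivalent to the positivity you are trying to prove. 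If you wish to salvage the Fourier-transform strategy, the correct intermediate statement to aim for is the displayed Plancherel identity, not a homomorphism property of $\cF$.
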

\begin{proof}
  Under the assumption that $S^2(A)=A$ the squared antipode descends to an automorphism of $C=H/HA^+$. We can now repeat the proof of \cite[Proposition 3.8]{ff} to conclude that if $e_i$ and $e^i$, $i\in I$ are dual bases of $C$ and $C^\bullet$ respectively, then
  \begin{equation*}
    \theta=\sum_i \langle e^i,S^2e_{i(1)}\rangle e_{i(2)}\in \overline{C}:=\prod_\alpha C_\alpha
  \end{equation*}
  (whose definition makes sense because $S^2$ acts on $C$) is a positive functional on $C^\bullet$.

  Moreover, the proof of \cite[Theorem 3.1]{ff} now goes through to show that for any choice of $x,y\in H$ we have
  \begin{equation*}
    \varphi\left(\left(S^2y\right)^* S^2x\right) = \theta\left(\left(\cF y\right)^*\cF x\right). 
  \end{equation*}
  The positivity of $\theta$ now implies that of $\varphi$, given that $S^2:C\to C$ is onto.
\end{proof}

\Cref{pr.s2>pos} takes care of half of \Cref{th.s2}. before proving the converse, we need to recall some background on von Neumann algebra modular theory, as covered in \cite{tak2}. Consider the von Neumann algebra $M$ obtained as the $W^*$ closure of the GNS representation of $H$ on the Hilbert space $L^2(H,h)$ attached to the Haar state $h$. Denote also by $N$ von Neumann closure of $A$ in $M$.

Under the assumption that $E:H\to A$ is positive, it extends to an expectation $E:M\to N$ with respect to the faithful state $h$ in the sense of \cite[Definition IX.4.1]{tak2}, as recounted in the introduction. According to (one implication of) \cite[Theorem IX.4.2]{tak2}, this means that $N$ is invariant under the {\it modular automorphism group} $\sigma_t$, $t\in \bR$ of $M$ attached to the weight $h$. Recall what this means \cite[$\S$VIII.1]{tak2}:  

There is a unique one-parameter group $\sigma_t$ of von Neumann algebra automorphisms of $M$ such that
\begin{itemize}
\item $h\circ \sigma_t=h$ for all $t\in \bR$; 
\item For every pair of elements $x,y\in M$ there is a bounded continuous function $F_{x,y}$ on the strip $\{\Im(z)\in [0,1]\}$, holomorphic in the interior of the strip, such that
  \begin{equation*}
    F_{x,y}(t)=h(\sigma_t(x)y),\quad F_{x,y}(t+i)=h(y\sigma_t(x)),\ \forall t\in \bR. 
  \end{equation*}
\end{itemize}

Now, according to \cite[Theorem 5.6]{Wor87}, the modular automorphism group of $h$, when restricted to $H$, is of the form
\begin{equation*}
  H\ni x\mapsto \sigma_t(x)=f_{ti}*x*f_{ti},
\end{equation*}
where
\begin{itemize}
\item $\{f_z\}_{z\in \bC}$ is a family of functionals on $H$ such that $z\mapsto f_z(x)$ is entire and of exponential growth on the right hand half plane for every $x\in H$;   
\item `$*$' signifies convolution of functionals on and elements of $H$, as in
  \begin{equation*}
    f*x = f(x_{(2)})x_{(1)},\quad x*f = f(x_{(1)})x_{(2)};
  \end{equation*}
\item $S^2(x)=f_{-1}*x*f_1$ for all $x\in H$. 
\end{itemize}

With this bit of background in hand, we can proceed.

\begin{proof of s2}
As noted, we already have one implication by \Cref{pr.s2>pos}, so it suffices to prove its converse. To this end, suppose $E:H\to A$ is positive as per \Cref{def.pos}.

As per the above discussion, we have
\begin{equation*}
  f_{ti}*x*f_{ti}\in A,\ \forall x\in A,\ \forall t\in \bR
\end{equation*}
for the family of functionals $\{f_z\}_{z\in \bC}$ constructed in \cite[Theorem 5.6]{Wor87}.

Fix an arbitrary $x\in A$, and let $C\subset H$ be a finite-dimensional subcoalgebra containing $x$. Consider the map
\begin{equation*}
  \psi:\bC\to C/C\cap A
\end{equation*}
sending $z$ to the image of $f_z*x*f_z$ (note that left and right convolutions by functionals preserve $C$, as the latter is a subcoalgebra). The map is holomorphic if we give the complex finite-dimensional vector space $C/C\cap A$ its standard complex manifold structure, and we know that $\psi$ vanishes on the imaginary axis. It follows that it vanishes everywhere, i.e.
\begin{equation}
  \label{eq:zz}
  f_z*x*f_z\in A,\ \forall z\in \bC. 
\end{equation}
On the other hand, because $A$ is a right $H$-coideal, we have
\begin{equation}
  \label{eq:zl}
  f_w*x\in A,\ \forall w\in \bC. 
\end{equation}
Since \Cref{eq:zz,eq:zl} apply to all $x\in A$ and by \cite[Theorem 5.6]{Wor87} $f_{w}*f_z=f_{w+z}$, we have
\begin{equation*}
  f_u * x * f_v\in A,\ \forall x\in A,\ \forall u,v\in \bC. 
\end{equation*}
In particular
\begin{equation*}
  S^2(x)=f_{-1}*x*f_1\in A,\ \forall x\in A
\end{equation*}
and similarly for $S^{-2}(x)=f_1*x*f_{-1}$, finishing the proof. 
\end{proof of s2}


\bibliographystyle{plain}
\addcontentsline{toc}{section}{References}

\def\polhk#1{\setbox0=\hbox{#1}{\ooalign{\hidewidth
  \lower1.5ex\hbox{`}\hidewidth\crcr\unhbox0}}}

\end{document}